\documentclass[10pt]{amsart}
\pdfoutput=1
\usepackage{amsmath}
\usepackage{amsfonts}
\usepackage{amssymb}
\usepackage{amsthm, enumerate, enumitem}
\usepackage{hyperref}
\usepackage{comment}
\usepackage{float}
\usepackage{youngtab}
\usepackage{ytableau}
\usepackage{rotating}
\usepackage{subcaption}
\usepackage{xcolor}
\usepackage{bbding}
\usepackage{tikz}
\usepackage{tikz-cd}
\usepackage[paper=a4paper, margin=3.5cm]{geometry}





%
%

\def\RR{{\NZQ R}}
\def\CC{{\NZQ C}}

\def\PP{{\NZQ P}}

%
%
\def\frk{\frak}               

\def\Phi{{\frk n}}
\def\Phi{{\frk N}}
%

\def\TT{{\mathbb T}}

%

%
\def\opn#1#2{\def#1{\operatorname{#2}}} 
%
\opn\chara{char} \opn\length{\ell} \opn\pd{pd} \opn\rk{rk}
\opn\projdim{proj\,dim} \opn\injdim{inj\,dim} \opn\rank{rank}
\opn\spn{span}\opn\Seg{Seg}
\opn\depth{depth} \opn\grade{grade} \opn\height{height}
\opn\embdim{emb\,dim} \opn\codim{codim}

\opn\Tr{Tr} \opn\bigrank{big\,rank}
\opn\superheight{superheight}\opn\lcm{lcm}
\opn\trdeg{tr\,deg}
\opn\reg{reg} \opn\lreg{lreg} \opn\ini{in} \opn\lpd{lpd}
\opn\size{size}\opn\bigsize{bigsize}
\opn\cosize{cosize}\opn\bigcosize{bigcosize}
\opn\sdepth{sdepth}\opn\sreg{sreg}
\opn\link{link}\opn\fdepth{fdepth} \opn\trdeg{trdeg} \opn\mod{mod}
\opn\spann{span}
%
\opn\div{div} \opn\Div{Div} \opn\cl{cl} \opn\Cl{Cl}
%
%
\opn\Spec{Spec} \opn\Supp{Supp} \opn\supp{supp} \opn\Sing{Sing}
\opn\Ass{Ass} \opn\Min{Min}\opn\Mon{Mon} \opn\dstab{dstab} \opn\astab{astab}
\opn\Syz{Syz}
%
%
\opn\Ann{Ann} \opn\Rad{Rad} \opn\Soc{Soc} \opn\Aut{Aut}
%
%
\opn\Im{Im} \opn\Ker{Ker} \opn\Coker{Coker} \opn\Am{Am}
\opn\Hom{Hom} \opn\Tor{Tor} \opn\Ext{Ext} \opn\End{End}
\opn\Aut{Aut} \opn\id{id}

\opn\nat{nat}
\opn\pff{pf}
\opn\Pf{Pf} \opn\GL{GL} \opn\SL{SL} \opn\mod{mod} \opn\ord{ord}
\opn\Gin{Gin} \opn\Hilb{Hilb}\opn\sort{sort}
\opn\S{S} \opn\dim{dim} \opn\supp{supp}\opn\trdeg{trdeg}\opn\sort{sort}
%
%
\opn\aff{aff} \opn\con{conv} \opn\relint{relint} \opn\st{st}
\opn\lk{lk} \opn\cn{cn} \opn\core{core} \opn\vol{vol}
\opn\link{link} \opn\star{star}\opn\lex{lex}
\opn\conv{conv} \opn\Ehr{Ehr}\opn\Pic{Pic}
\opn\Conv{Conv}
\opn\Id{Id}
\opn\gr{gr}

%
%

\def\pot#1#2{#1[\kern-0.28ex[#2]\kern-0.28ex]}

%
%
\opn\dirlim{\underrightarrow{\lim}}
\opn\inivlim{\underleftarrow{\lim}}
%
%
%

%
%

\def\Implies{\ifmmode\Longrightarrow \else
        \unskip${}\Longrightarrow{}$\ignorespaces\fi}
\def\implies{\ifmmode\Rightarrow \else
        \unskip${}\Rightarrow{}$\ignorespaces\fi}
\def\iff{\ifmmode\Longleftrightarrow \else
        \unskip${}\Longleftrightarrow{}$\ignorespaces\fi}

\let\:=\colon

\newtheorem{Theorem}{Theorem}[section]
 \newtheorem{Lemma}[Theorem]{Lemma}
 
 \newtheorem{Proposition}[Theorem]{Proposition}

 \newtheorem{Definition}[Theorem]{Definition}


\def\CC{\mathbb{C}}

\def\RR{\mathbb{R}}

\def\PP{\mathbb{P}}

\title[Proof ]{Proof of a Conjecture of Drton, Sturmfels and Sullivant on the maximum likelihood degree of the Gaussian graphical model of a cycle}
 \author{Rodica Andreea Dinu}
\address{%
	University of Konstanz, Fachbereich Mathematik und Statistik, Fach D 197 D-78457 Konstanz, Germany, and Institute of Mathematics ``Simion Stoilow" of the Romanian Academy, Calea Grivitei 21, 010702, Bucharest, Romania}
	\email{rodica.dinu@uni-konstanz.de}
 
\author{Martin Vodi\v{c}ka}
\address{Šafárik University, Faculty of Science, Jesenná 5, 04154 Košice, Slovakia}
	\email{martin.vodicka@upjs.sk}

\begin{document}

\maketitle

\begin{abstract}
In this article, we compute the precise value of the maximum likelihood degree of the Gaussian graphical model of a cycle, confirming a conjecture due to Drton, Sturmfels and Sullivant.
\end{abstract}

\section{Introduction}

Maximum likelihood estimation (MLE) is a statistical technique that uses certain observable data to estimate the parameters of an assumed probability distribution. This is accomplished by optimizing a likelihood function to make the observed data the most likely under the presumptive statistical model.  The point in the parameter space that maximizes the likelihood function is the maximum likelihood estimate.  The maximum likelihood degree (ML-degree) measures the complexity of finding the MLE. The study of the geometry of the ML-degree of an algebraic statistical model was introduced by Catanese, Ho\c sten, Kethan, and Sturmfels, see \cite{CHKS, HKS}, and it became one of the most active topics in algebraic statistics is the study of the ML-degree, see e.g. \cite{carlos, huh, huh2, orlando, bc, bsp, Seth, Bodensee, mateusz1}.

{\it A multivariate Gaussian distribution} in $\RR^n$ is determined by the mean vector $\mu \in \RR^n$ and a positive definite $n\times n$ matrix $\Sigma$ called the \emph{covariance matrix}. Its inverse matrix $K=\Sigma^{-1}$ is also positive definite and known as the \emph{concentration matrix} of the distribution. Let us fix a linear space $\Lambda$ of symmetric $n\times n$ matrices that contains a positive-definite matrix. As introduced by Anderson, the \textit{linear concentration model} associated to $L$ is the variety
$\Lambda_L:=\{K^{-1}:K\in \Lambda\}$. We identify $S^2(\CC^n)$ with the space of symmetric $n\times n$ matrices over $\CC$ and, for a vector space $V$, we denote by $\PP(V)$ its projectivization.  There are two important invariants of this model. One of them is the degree of the variety, which is the degree of the variety $\Lambda^{-1}:=\overline{\Lambda_L}$, and the other one is the ML-degree of the model, which can be defined as the number of critical points of the log-likelihood function $\ell$ over $S^2(\CC^n)$:
\begin{equation*}\label{eq:lik}
    \ell(\Sigma) = \log \det (\Sigma^{-1}) - \mathrm{tr}(S\Sigma^{-1}),
\end{equation*}
where $S$ is the sample covariance matrix.
Let us consider the projections: $\pi : S^2(\CC^n) \rightarrow S^2/{\Lambda^{\perp}}$.
\begin{Theorem}(\cite{barndorff}, see also \cite{bc})
For a linear concentration model $\Lambda$ and the sample covariance matrix $S\in \Lambda_L$, the MLE is given by the unique matrix $\Sigma_0 \in \Lambda_L$ such that $\pi(\Sigma_0)=\pi(S)$.
\end{Theorem}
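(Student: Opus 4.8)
The plan is to reparametrize the likelihood by the concentration matrix and exploit the strict concavity of $\log\det$. Writing $K=\Sigma^{-1}\in\Lambda$, the log-likelihood becomes
\[
\ell = \log\det K - \tr(SK),
\]
now viewed as a function on the open convex cone $\mathcal{C}=\{K\in\Lambda:\ K\succ 0\}$ of positive-definite matrices in $\Lambda$, which is nonempty by the standing hypothesis on $\Lambda$. Since $\log\det$ is strictly concave on the positive-definite cone and $-\tr(SK)$ is linear in $K$, the function $\ell$ is strictly concave on $\mathcal{C}$. Consequently $\ell$ has at most one critical point on $\mathcal{C}$, and any such point is automatically the global maximizer; this already yields uniqueness of the MLE once existence is shown.

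Next I would extract the first-order optimality condition. For a direction $H\in\Lambda$ one has $\frac{d}{dt}\log\det(K+tH)\big|_{t=0}=\tr(K^{-1}H)$ and $\frac{d}{dt}\tr\!\big(S(K+tH)\big)\big|_{t=0}=\tr(SH)$, so the directional derivative of $\ell$ at $K$ in direction $H$ equals $\tr\big((K^{-1}-S)H\big)$. Hence $K\in\mathcal{C}$ is critical exactly when $\tr\big((K^{-1}-S)H\big)=0$ for every $H\in\Lambda$, i.e.\ when $K^{-1}-S$ is orthogonal to $\Lambda$ with respect to the trace pairing, that is $K^{-1}-S\in\Lambda^{\perp}$. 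Setting $\Sigma_0=K^{-1}\in\Lambda_L$, this says precisely $\Sigma_0-S\in\Lambda^{\perp}$, which is the condition $\pi(\Sigma_0)=\pi(S)$. Thus the critical equation of $\ell$ is equivalent to the projection identity in the statement.

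Finally I would establish that the maximum is attained. Using that $S$ is positive definite, $\ell(K)\to-\infty$ both as $K$ approaches the boundary of $\mathcal{C}$, where $\det K\to 0$ and so $\log\det K\to-\infty$ while $\tr(SK)$ stays bounded, and as $\|K\|\to\infty$ inside $\mathcal{C}$, where the linear term $-\tr(SK)$ grows and dominates the logarithmic growth of $\log\det K$. Therefore $\ell$ is coercive on $\mathcal{C}$ and attains its supremum at an interior point $K_0$, which is a critical point. By the computation above $\Sigma_0=K_0^{-1}$ is a point of $\Lambda_L$ with $\pi(\Sigma_0)=\pi(S)$, and strict concavity forces it to be the unique such point, completing the proof. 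I expect the coercivity/existence step to be the main obstacle: one must rule out escape of a maximizing sequence either to the boundary of the positive-definite cone or to infinity, which is exactly where positive-definiteness of $S$ (its lying in the interior of the model) is needed; the strict concavity argument then both promotes the critical point to the global optimum and transfers uniqueness to the fiber $\pi^{-1}(\pi(S))\cap\Lambda_L$.
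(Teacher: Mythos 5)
The paper itself gives no proof of this statement: it is quoted as a known result of Barndorff-Nielsen \cite{barndorff} (see also Sturmfels--Uhler \cite{bc}), and your argument is precisely the standard proof from that literature --- reparametrize by the concentration matrix $K=\Sigma^{-1}\in\Lambda$, use strict concavity of $K\mapsto \log\det K-\tr(SK)$ on the cone $\mathcal{C}=\{K\in\Lambda: K\succ 0\}$, identify the critical equation with $K^{-1}-S\in\Lambda^{\perp}$, i.e.\ $\pi(K^{-1})=\pi(S)$, and get existence from coercivity (which indeed needs $S\succ 0$). Your computations are correct. One caveat on how you phrase the conclusion: the uniqueness your argument yields --- and the only uniqueness that is true --- is uniqueness among \emph{positive definite} matrices $\Sigma_0\in\Lambda_L$ in the fiber, because strict concavity controls only critical points inside $\mathcal{C}$. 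The full fiber $\pi^{-1}(\pi(S))\cap\Lambda_L$ generally contains many additional complex (or indefinite real) points; counting them is exactly what the ML-degree measures and is the entire content of this paper, whose main result (Theorem~\ref{ML}) gives $(n-3)\cdot 2^{n-2}+1$ such points for $\Lambda=L_{C_n}$ and generic $S$. So your closing claim that uniqueness ``transfers to the fiber $\pi^{-1}(\pi(S))\cap\Lambda_L$'' must be read as restricted to the positive definite locus; read literally over $\CC$ it would contradict Theorem~\ref{ML}. With that restriction made explicit, your proof is complete and matches the approach of the cited sources.
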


\begin{Definition}(\cite{Kathlen})
The ML-degree of a linear concentration model is the degree of the dominant rational map $\pi: \PP(\overline{\Lambda_L}) \dashrightarrow \PP(S^2(\CC^n)/{\Lambda^{\perp}})$.
\end{Definition}
It is known that the degree of the model is always upper bounded by the ML-degree, and equality holds, by a theorem of Teissier \cite[II.2.1.3]{teissier} for general $\Lambda$. 
We will be interested in a special class of linear concentration models, for which the degree and the ML-degree of the model are different.
The {\it Gaussian graphical model} plays a special role in algebraic statistics and, in this case, the linear space $\Lambda\subset S^2(\CC^n)$ is defined by the vanishing of some entries of the matrix.  More precisely, let $\Gamma$ be an (undirected) graph with the vertex set $[n]$ and edges $E(\Gamma)$. Then the linear space $L_{\Gamma}$ associated to this model consists of symmetric matrices $A$ for which $A_{ij}=0$ when $i\neq j$ and $(i,j)\not\in E(\Gamma)$.

The undirected Gaussian graphical model associated with the graph $\Gamma$ is the family of multivariate normal distributions with covariance matrix $\Sigma$ such that $(\Sigma^{-1})_{ij}=0$ for every missing edge $(i,j)$. 
The maximum likelihood estimation for this model is a matrix completion problem, see~\cite[Theorem 2.1.14]{drtonSS}.

So far, the precise value of the ML-degree for Gaussian graphical models was only known in the case of chordal graphs, where the ML-degree is equal to one \cite{bc}.
This makes the cycle $C_n$ the simplest class of graphs for which the ML-degree is not known. Thus, in this article, we will consider only this special case $\Gamma=C_n$. The space $L_{C_n}$ was heavily studied in the literature.  For example, the degree of the variety $L_{C_n}^{-1}$ was computed in \cite[Theorem 1.4]{DMV}, by resolving a conjecture of Sturmfels and Uhler \cite{bc}, and this is given by the following formula:
\[
\deg(L_{C_n}^{-1})=\frac{n+2}{4}{2n \choose n}-3\cdot 2^{2n-3}.
\]
The generators of the ideal of the variety $L_{C_n}^{-1}$ were computed in \cite{conner2023sullivant}, confirming another conjecture of Sturmfels and Uhler. Regarding the ML-degree, we provide a proof of the following theorem which was a long-standing conjecture posed by Drton, Sturmfels and Sullivant:

\begin{Theorem}\label{ML}(\cite[Conjecture in Section 7.4]{drtonSS}) For the $n$-cycle, the maximum likelihood degree of the model is equal to
 $$(n-3)\cdot 2^{n-2}+1.$$
\end{Theorem}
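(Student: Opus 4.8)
The plan is to realize the ML-degree as the number of complex critical points of the score equations and then to exploit the essentially one-dimensional structure forced by the cycle. By the matrix-completion characterization of the MLE recalled above (the theorem of Barndorff--Nielsen, equivalently \cite[Theorem 2.1.14]{drtonSS}), for a generic symmetric matrix $S$ the ML-degree equals the number of complex symmetric matrices $K\in L_{C_n}$ for which $\Sigma:=K^{-1}$ agrees with $S$ on the support of the model:
\[
\Sigma_{ii}=S_{ii},\qquad \Sigma_{i,i+1}=S_{i,i+1}\qquad(1\le i\le n,\ \text{indices mod }n).
\]
Writing $a_i:=K_{ii}$ and $b_i:=K_{i,i+1}$ for the $2n$ free entries of $K$, I would clear denominators and work with the $2n$ polynomial equations $C_{ii}(K)=S_{ii}\det K$ and $C_{i,i+1}(K)=S_{i,i+1}\det K$, where $C_\bullet$ denotes the relevant cofactor. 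A crude B\'ezout count on this system is of order $n^{2n}$, enormously larger than the conjectured $(n-3)\cdot 2^{n-2}+1$, so the entire difficulty is concentrated in explaining why almost all of these intersection points are spurious.

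The first genuine step is to linearize using the cyclic ``transfer matrix'' structure. Because $K\in L_{C_n}$ is cyclic tridiagonal, for each fixed column $l$ the entries $\Sigma_{k,l}$ are exactly row $k$ of $K\Sigma=I$ and therefore satisfy the periodic three-term recurrence
\[
b_{k-1}\,\Sigma_{k-1,l}+a_k\,\Sigma_{k,l}+b_k\,\Sigma_{k+1,l}=\delta_{kl}\qquad(1\le k\le n).
\]
Setting
\[
A_k=\begin{pmatrix}-a_k/b_k & -b_{k-1}/b_k\\ 1 & 0\end{pmatrix},\qquad \mathcal M=A_nA_{n-1}\cdots A_1,
\]
the whole matrix $\Sigma$ is governed by the single monodromy $\mathcal M=\mathcal M(a,b)$ together with the inhomogeneities located on the diagonal. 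Deleting a vertex of $C_n$ turns it into a path, so the cofactors $C_{ii}$ are continuants and the off-diagonal cofactors factor through the two arcs of the cycle; both are read off from partial products of the $A_k$ and from $\tr\mathcal M$. I would rewrite the support constraints $\Sigma_{ii}=S_{ii},\ \Sigma_{i,i+1}=S_{i,i+1}$ entirely in terms of $\mathcal M$ and the local factors $A_k$, so that ``only the diagonal band of $\Sigma$ is prescribed'' becomes a constraint on the spectral data of $\mathcal M$ together with local gluing conditions at each site.

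The count of solutions should then organize itself along this factorization of $\mathcal M$. Reconstructing the local data $(a_k,b_k)$ from the prescribed band is an inverse problem for a periodic Jacobi matrix, and at each interior site it carries a two-fold (branch/sign) ambiguity; propagating these choices around the cycle and imposing the global periodicity encoded by $\mathcal M$ is what I expect to produce the factor $2^{n-2}$, while the linear coefficient $n-3$ should emerge from the combinatorics of which sites remain free after the three boundary constraints (two fixing a normalization, one closing the cycle) are used up. Concretely, I anticipate that the transformed system has a Newton-polytope structure whose Bernstein--Kushnirenko mixed volume equals $(n-3)\cdot 2^{n-2}$, with the isolated additive $+1$ accounting for a single degenerate stratum — the ``rank-closing'' solution on which $\mathcal M$ is parabolic, i.e. where $\Sigma$ lies on a boundary arc of $\overline{\Lambda_L}$. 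Establishing the value $(n-3)\cdot 2^{n-2}$ either as such a mixed volume or, equivalently, as this branch count is the combinatorial heart of the argument.

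The hard part will be tightness. I must prove that for generic $S$ every one of these branches yields a genuine, distinct, finite solution, and that no solutions are lost to — or gained from — the toric boundary where some $b_k=0$ or $\det K=0$. This is precisely where the gap between the B\'ezout bound $\sim n^{2n}$ and the answer $\sim 2^{n}$ has to be accounted for: the overwhelming majority of the intersection points of the score equations must be shown to escape to this boundary and hence not to contribute to the ML-degree. I would handle this by a genericity/dimension argument — choosing $S$ generic, exhibiting the solution explicitly on each branch of the transfer-matrix reconstruction, and verifying nonvanishing of the Jacobian of the score equations there — and I would isolate the additive $+1$ by a separate direct analysis of the parabolic stratum. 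Matching the resulting total against the small cases $n=3$ (giving $1$), $n=4$ (giving $5$), and $n=5$ (giving $17$) provides a useful check on the bookkeeping of the $(n-3)\cdot 2^{n-2}$ term and the correction.
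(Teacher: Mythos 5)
Your proposal is a program rather than a proof: the two steps that carry all the mathematical content are explicitly deferred. First, the central combinatorial claim --- that the transfer-matrix reconstruction has exactly $(n-3)\cdot 2^{n-2}$ branches, or equivalently that the relevant Bernstein--Kushnirenko mixed volume equals this number --- is stated as an expectation (``I anticipate,'' ``the combinatorial heart of the argument'') and never established. Second, the tightness step, which you correctly identify as the hard part, is also left open: for a mixed-volume (or branch-count) argument to compute the ML-degree one must show that for generic $S$ every solution of the score equations is isolated and reduced, that no solutions lie on the toric boundary (some $b_k=0$ or $\det K=0$), and that the structured coefficients coming from $S$ are generic enough for the Bernstein bound to be attained. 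None of this is carried out, and it is precisely where the collapse from the B\'ezout-scale count to $\sim 2^n$ has to be justified. A smaller but telling point: your attribution of the additive $+1$ to a ``parabolic stratum'' where $\Sigma$ degenerates is unsupported speculation; in the known description of the solution set (for the special choice $S=\Id$) the extra point is simply the identity matrix itself, a perfectly nondegenerate solution.

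For comparison, the paper avoids genericity over $S$ entirely by fixing $S=\Id$ and then proving that this specific, highly non-generic choice still computes the ML-degree. This requires two theorems: (i) the fiber over the class of $\Id$ in the graph of the projection $\pi$ contains no point of the base locus $L_{C_n}^{\perp}$, proved via an explicit bihomogeneous quartic $R_k$ in the ideal of the graph; and (ii) every point of $L_{C_n}^{-1}\cap(\Id+L_{C_n}^{\perp})$ is a transverse (smooth, reduced) intersection point, proved by case-by-case Jacobian and tangent-space computations for the identity, the checkerboard matrices, and the matrices $(M_n^{\pm}(z))^{-1}$. The count itself is not re-derived: it is imported from the prior complete classification of the intersection points in \cite[Lemma 3.11]{MLbound}, where the factor $2^{n-2}$ arises from the orbit of the sign-conjugation group $\mathcal D_n^{\pm}$ and the factor $n-3$ from the number of roots of explicit determinantal polynomials --- which is the rigorous version of the sign-ambiguity heuristic in your sketch. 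So your plan is not unreasonable as intuition, but everything your plan labels as ``expected'' or ``hard'' is exactly what a complete proof must contain, and an honest completion of your route would have to solve the same transversality and boundary problems that the paper solves, without the benefit of the explicit solution list that makes those problems tractable.
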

In general, the ML-degree of a linear space $L$ can be computed as the number of intersection points in the intersection $L^{-1}\cap(S+L^{\perp})$ for a generic matrix $S$. Our strategy is to choose a specific matrix $S$, namely $S=\Id$, show that it behaves as a general matrix and compute the intersection. The second part has already been done in \cite[Lemma 3.11]{MLbound}, where the intersection points from $L_{C_n}^{-1}\cap(\Id+L_{C_n}^{\perp})$ are completely described up to the action of a suitable group. However, this result, together with \cite[Theorem 3.15]{MLbound}, gives only a lower bound of the ML-degree. We will continue with analyzing this intersection to prove the precise value of the ML-degree. 

This paper is structured as follows. 
In Section~\ref{identity} we show that the fiber over the class of the identity in the graph of the projection $\pi: S^2(\CC^n)\dashrightarrow S^2(\CC^n)/L_{C_n}^{\perp}$ does not contain any point from the base locus $L_{C_n}^{\perp}$, in Theorem~\ref{noidentity}. This is done by finding a suitable bihomogeneous equation of the ideal of the graph of the projection $\pi$ restricted to $L_{C_n}^{-1}$ which allows us to show that, in the graph, there is no point of the form $(A, \Id)$ with $A$ in the base locus.

Section~\ref{smoothness} is devoted to proving that the intersection $L_{C_n}^{-1}\cap(\Id+L_{C_n}^{\perp})$ is transverse. Since the set of isolated points in the intersection is known \cite[Lemma 3.11]{MLbound}, we have to prove the smoothness of all of them. These points are of many different types, and for each type, we compute the tangent spaces and show that they are zero-dimensional.

We conclude the article by putting it all together to obtain a proof of the desired Theorem~\ref{ML}.

\section{Fiber over the identity at a glance}\label{identity}
\textbf{Setting:} Consider the projection $\pi:\PP(S^2(\CC^n))\dashrightarrow\PP(S^2(\CC^n)/L_{C_n}^\perp)$. Consider the graph of this projection restricted to $L_{C_n}^{-1}$, i.e.

$$\Gamma_n=\overline{\{(A,\pi(A)): A\in L_{C_n}^{-1}\setminus L_{C_n}^\perp\}}\subset \PP(S^2(\CC^n))\times\PP(S^2(\CC^n)/L_{C_n}^\perp).$$

\begin{Theorem}\label{noidentity}
In $\Gamma_n$ there does not exist a point of the form $(A,\Id)$ for $A\in L_{C_n}^\perp$. In other words, in the fiber over (the class of) the identity there is no matrix from $L_{C_n}^\perp$.
\end{Theorem}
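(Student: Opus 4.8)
The plan is to work in the bihomogeneous coordinate ring of $\PP(S^2(\CC^n))\times\PP(S^2(\CC^n)/L_{C_n}^\perp)$, with coordinates $x_{ij}$ on the first factor and coordinates $y_e$ on the second factor indexed by the diagonal positions $e=(i,i)$ and the cycle edges $e=(i,i+1)$. First I would record two families of equations that hold on $\Gamma_n$. Since the generic first coordinate lies in $L_{C_n}^{-1}$, the adjugate of $x$ is supported on the diagonal and the cycle edges, so for every non-edge (chord) $(i,j)$ the signed $(n-1)$-minor $C_{ij}(x)$ (the $(i,j)$ cofactor) lies in $I(\Gamma_n)$; and since $y=\pi(x)$ projectively, the $2\times 2$ proportionality minors $y_ax_b-y_bx_a$, for $a,b$ diagonal or edge, lie in $I(\Gamma_n)$ as well. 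A point $(A,\Id)\in\Gamma_n$ with $A\in L_{C_n}^\perp$ would have to annihilate every bihomogeneous element of $I(\Gamma_n)$ after the substitution $y_{ii}=1$, $y_{i,i+1}=0$ and the restriction of $x$ to $L_{C_n}^\perp$ (diagonal and edge entries zero, only chords surviving); in particular the cofactors force $A\in L_{C_n}^{-1}\cap L_{C_n}^\perp$. It therefore suffices to produce bihomogeneous members of $I(\Gamma_n)$ whose specializations at $y=\Id$ have no common zero on $\PP(L_{C_n}^\perp)$.

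Second, I would manufacture such equations from the Laplace identities $\sum_k x_{ik}C_{jk}(x)=\delta_{ij}\det x$. On $L_{C_n}^{-1}$ the chord cofactors vanish, so the row expansion may be restricted to diagonal and edge indices $k$; transferring those $x_e$ onto $y_e$ via the proportionality minors and subtracting two such expressions to cancel the common value cancels $\det x$ and yields genuine elements of $I(\Gamma_n)$ that are linear in $y$. At $y=\Id$ these collapse to differences $C_{ii}(x)-C_{i'i'}(x)$ of principal $(n-1)$-minors, which is too weak — for instance all such minors vanish identically on $L_{C_n}^\perp$ when $n=4$. I would therefore pass to higher $y$-degree: multiplying a chord cofactor $C_{ij}(x)$ by a product of diagonal variables raises the diagonal/edge degree of every monomial, after which those factors can be moved onto $y$ through the proportionality minors. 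The resulting bihomogeneous equation, evaluated at $y=\Id$, retains precisely the monomials of $C_{ij}$ whose non-chord part is a product of diagonal entries, paired against the pure-chord monomials; restricted to $L_{C_n}^\perp$ this is a nonzero polynomial in the chord variables.

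The decisive step is to show that these chord polynomials have empty common zero locus on $\PP(L_{C_n}^\perp)$. The mechanism is transparent on an arc $\Sigma(t)\in L_{C_n}^{-1}$ with $\Sigma(t)\to A$ and $\pi(\Sigma(t))\to\Id$: convergence to the identity direction forces the diagonal entries of $\Sigma(t)$ to a common scale $s(t)$ that strictly dominates the edge entries, while the chords carry the top scale, matching $A\in L_{C_n}^\perp$. Within one chord cofactor the balance between a pure-chord monomial and a diagonal-times-chord monomial then forces some chord to have order exactly $s$, whereas a second chord cofactor forces the same chord to have order $o(s)$; the bihomogeneous equation packages this incompatibility as a nonzero value at $(A,\Id)$. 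I expect this to be the main obstacle: the cofactors are large determinants, and one must identify, uniformly in $n$, the two governing monomials attached to each chord and then prove that the global system of leading-order balances over all chords is inconsistent for every nonzero $A\in L_{C_n}^\perp$. Carrying out this monomial bookkeeping, rather than the formal setup of the previous paragraphs, is where the real work lies.
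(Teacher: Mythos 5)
Your opening setup coincides with the paper's (work in the bihomogeneous ideal $I_n$ of $\Gamma_n$, use the proportionality relations to move diagonal/edge variables from $x$ to $y$, and reduce to showing that the specializations at $y=\Id$ have no common zero on nonzero matrices of $L_{C_n}^{-1}\cap L_{C_n}^\perp$), but the engine you propose for manufacturing useful equations does not work, and the step you defer is the actual content of the proof. The obstruction is bihomogeneity itself. Any equation built by the conversion trick has a fixed $y$-degree $q$; at $y=\Id$ a term dies if any converted variable is an edge, and on restriction to $L_{C_n}^\perp$ a term dies if any diagonal or edge $x$-variable is left unconverted. Hence the specialized polynomial consists exactly of those monomials of the original $x$-polynomial having precisely $q$ diagonal variables, no edge variables, and chords elsewhere. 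In your recipe $P=C_{ij}(x)\cdot x_{k_1k_1}\cdots x_{k_dk_d}$, the pure-chord monomials of $C_{ij}$ force $q\le d$, and with $q=d$ the surviving monomials are exactly those pure-chord monomials: the specialization is just $C_{ij}$ restricted to $L_{C_n}^\perp$, which vanishes automatically on $L_{C_n}^{-1}\cap L_{C_n}^\perp$ (chord cofactors vanish on all of $L_{C_n}^{-1}$) and so carries no information. In particular, you can never ``pair'' pure-chord monomials against diagonal-times-chord monomials inside one bihomogeneous equation, since the two groups would require different values of $q$. The paper's key move is precisely designed to get around this: it combines two $3\times 3$ minors, $R'_k=x_{2,k+1}\det M(\{1,2,k\},\{1,k,k+1\})-x_{1,k}\det M(\{2,k,k+1\},\{1,2,k+1\})$, so that the pure-chord terms cancel; afterwards every term of $R'_k$ has exactly two diagonal/edge variables, both can be converted, and the resulting $R_k$ specializes at $y=\Id$ to the genuinely new fiber relation $x_{1,k}^2-x_{2,k+1}^2=0$ (and its cyclic shifts). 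Also note the paper deliberately uses $3\times 3$ minors rather than the $(n-1)\times(n-1)$ cofactors, which keeps the cancellation explicit and uniform in $n$.

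The second gap is the endgame. You acknowledge that proving the inconsistency of the resulting constraints is ``where the real work lies'' and offer only an order-of-magnitude heuristic along an arc $\Sigma(t)\to A$. Beyond being non-rigorous, this cannot succeed without using the rank structure coming from $L_{C_n}^{-1}$: the fiber relations of the type above do have nonzero common zeros in $\PP(L_{C_n}^\perp)$ (for instance the all-ones chord pattern, or the chord part of the checkerboard matrix, satisfies every relation $x_{i,k}^2=x_{i+1,k+1}^2$), so no amount of such balance equations kills $\PP(L_{C_n}^\perp)$ by itself. The paper supplies the missing combinatorial fact as a separate lemma: using that all relevant $3\times 3$ minors vanish, every nonzero $A\in L_{C_n}^{-1}\cap L_{C_n}^\perp$ has indices with $x_{i,k}\neq 0$ and $x_{i,k+1}=x_{i+1,k}=x_{i+1,k+1}=0$, which directly contradicts $x_{i,k}^2=x_{i+1,k+1}^2$. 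Your proposal nowhere exploits this rank-$\le 2$ structure, so even granting the equations, the claimed emptiness of the common zero locus is not established. As it stands, the proposal is a plausible plan whose two essential ingredients — the cancellation that makes the bihomogeneous equations non-trivial, and the combinatorial contradiction on $L_{C_n}^{-1}\cap L_{C_n}^\perp$ — are both missing.
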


We will denote the coordinates in $\PP(S^2(\CC^n))$ by $x_{i,j}$ for $1\le i\le j\le n$. Sometimes, for convenience, we will not be strict about $i\le j$ and we naturally define $x_{j,i}=x_{i,j}$. Analogously, we will denote the coordinates in $\PP(S^2(\CC^n)/L_{C_n}^\perp)$ by $y_{i,j}$ where $1\le i \le j \le i+1 \le n$ or $(i,j)=(1,n)$.

We will often work with submatrices. For $n\times n$ matrix $M$, and sets $I,J\subset\{1,2,\dots,n\}$, we denote by $M(I,J)$ the $|I|\times |J|$ submatrix of $M$ formed by rows indexed by $I$ and columns indexed by $J$.

\begin{Lemma}
Let $I_n$ be the bihomogeneous ideal of $\Gamma_n$. Then for any $3\le k\le n-1$, we have

$$R_k:=y_{1,1}y_{k,k+1}x_{2,k}x_{2,k+1}-y_{1,1}y_{k,k}x_{2,k+1}x_{2,k+1}
-y_{1,2}y_{k,k+1}x_{1,k}x_{2,k+1}+y_{1,2}y_{k,k}x_{1,k+1}x_{2,k+1}-$$
$$
y_{1,2}y_{k+1,k+1}x_{1,k}x_{2,k}+y_{1,2}y_{k,k+1}x_{1,k}x_{2,k+1}
+y_{2,2}y_{k+1,k+1}x_{1,k}x_{1,k}-y_{2,2}y_{k,k+1}x_{1,k}x_{1,k+1}
\in I_n.$$

\end{Lemma}

\begin{proof}\label{quartic-equation}
Let us denote by $M$ the $n\times n$ symmetric matrix  from $\PP(S^2(\CC^n))$ with coordinates $x_{i,j}$. Consider the minors
$\det(M(\{1,2,k\},\{1,k,k+1\}))$ and $\det(M(\{2,k,k+1\},\{1,2,k+1\}))$. These minors are polynomials that are in the ideal of $L_{C_n}^{-1}$, by \cite{conner2023sullivant}, and therefore, are also in the ideal $I_n$.

Thus, $$R'_k:=x_{2,k+1}\cdot \det(M(\{1,2,k\},\{1,k,k+1\}))- x_{1,k}\cdot \det(M(\{2,k,k+1\},\{1,2,k+1\}))\in I_n.$$

By computation we obtain 
$$R'_k=x_{2,k+1}\cdot\det\begin{pmatrix}
x_{1,1}&x_{1,k}&x_{1,k+1}\\
x_{1,2}&x_{2,k}&x_{2,k+1}\\
x_{1,k}&x_{k,k}&x_{k,k+1}\\
\end{pmatrix}-x_{1,k}\cdot \det\begin{pmatrix}
x_{1,2}&x_{2,2}&x_{2,k+1}\\
x_{1,k}&x_{2,k}&x_{k,k+1}\\
x_{1,k+1}&x_{2,k+1}&x_{k+1,k+1}\\
\end{pmatrix}=$$
$$=x_{1,1}x_{k,k+1}x_{2,k}x_{2,k+1}-x_{1,1}x_{k,k}x_{2,k+1}x_{2,k+1}
-x_{1,2}x_{k,k+1}x_{1,k}x_{2,k+1}+x_{1,2}x_{k,k}x_{1,k+1}x_{2,k+1}-$$
$$x_{1,2}x_{k+1,k+1}x_{1,k}x_{2,k}+x_{1,2}x_{k,k+1}x_{1,k}x_{2,k+1}
+x_{2,2}x_{k+1,k+1}x_{1,k}x_{1,k}-x_{2,2}x_{k,k+1}x_{1,k}x_{1,k+1}.$$

Note that two terms $$x_{2,k+1}\cdot (-x_{1,k} x_{2,k}x_{1,k+1}+x_{1,k}x_{1,k}x_{2,k+1})$$ from the first determinant canceled out with the same terms from the second determinant.

Consider any bihomogeneous equation that belongs to the ideal $I_n$. This equation holds for all $(A,\pi(A))$ for $A\in L_{C_n}^{-1}$. If in every term we replace one $x_{i,j}$ by $y_{i,j}$, we obtain an equation that also holds for all $(A,\pi(A))$ for $A\in L_{C_n}^{-1}$. Thus, the resulting equation will also be a bihomogeneous equation from the ideal $I_n$.

We can easily see that if we do this twice (i.e. by replacing two $x_{i,j}$ by the corresponding $y_{i,j}$) for the polynomial $R'_k\in I_n$, we obtain the polynomial $R_k$ from the statement of the lemma.
\end{proof}

\begin{Lemma}\label{2x2-is-good}
Consider a matrix $A\in L_{C_n}^{-1}\cap L_{C_n}^\perp$. Suppose that there exist $1\le i,k\le n$ such that $x_{i,k}\neq 0$ and $x_{i,k+1}=x_{i+1,k}=x_{i+1,k+1}=0$ (indices are taken modulo $n)$. Then $(A,\Id)\not\in \Gamma_n$.
\end{Lemma}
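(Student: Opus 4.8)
The plan is to argue by contradiction: assume $(A,\Id)\in\Gamma_n$ for a matrix $A\in L_{C_n}^{-1}\cap L_{C_n}^\perp$ satisfying the stated pattern, and then exploit the relation $R_k\in I_n$ from the previous lemma, which must vanish at $(A,\Id)$. The key observation is that evaluating $R_k$ at the class of the identity is highly degenerate: in the quotient coordinates the identity has $y_{j,j}=1$ on the diagonal and $y_{p,q}=0$ on every edge coordinate, so in particular $y_{1,2}=y_{k,k+1}=0$. Substituting these values into $R_k$ annihilates every term carrying the factor $y_{1,2}$ or $y_{k,k+1}$, and the two surviving terms collapse to
\[
R_k\big|_{y=\Id}=x_{1,k}^2-x_{2,k+1}^2 .
\]
Consequently, if $(A,\Id)\in\Gamma_n$ then $A_{1,k}^2=A_{2,k+1}^2$.

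To match the general indices $i,k$ of the statement to this normalized relation, I would first reduce to the case $(i,i+1)=(1,2)$ using the cyclic symmetry of $C_n$. The rotation $\rho(j)=j+1-i\pmod n$ acts on $S^2(\CC^n)$ by simultaneous row/column permutation; it preserves $L_{C_n}$, $L_{C_n}^\perp$ and $L_{C_n}^{-1}$, hence preserves $\Gamma_n$ and the ideal $I_n$, and it fixes the class of $\Id$ because the identity is permutation invariant. Thus, replacing $A$ by $A'=\rho\cdot A$, I may assume the rows are $\{1,2\}$ while the columns become $\{k_0,k_0+1\}$ for some $k_0$. Since $A\in L_{C_n}^\perp$ has vanishing diagonal and edge entries, the hypothesis $x_{i,k}\neq0$ forces $\{i,k\}$ to be a chord of the cycle; a short case check then shows that the edges $\{i,i+1\}$ and $\{k,k+1\}$ share no vertex, so after rotation $k_0\in\{3,\dots,n-1\}$, which is precisely the range for which $R_{k_0}$ is available.

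Now I would apply the displayed restriction with $k=k_0$. Since $(A',\Id)\in\Gamma_n$ and $R_{k_0}\in I_n$, we obtain $(A'_{1,k_0})^2=(A'_{2,k_0+1})^2$. The rotation sends the nonzero entry $x_{i,k}$ into the slot $x_{1,k_0}$ and the entry $x_{i+1,k+1}$ into the slot $x_{2,k_0+1}$, so the hypothesis $x_{i+1,k+1}=0$ gives $A'_{2,k_0+1}=0$, whence $A'_{1,k_0}=0$, that is $x_{i,k}=0$, contradicting $x_{i,k}\neq0$. This completes the contradiction and shows $(A,\Id)\notin\Gamma_n$.

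The main obstacle I anticipate is bookkeeping rather than conceptual. One must verify carefully, modulo $n$, that the symmetry reduction lands the column edge in the admissible range $3\le k_0\le n-1$, and that orientations are preserved so that the nonzero entry genuinely occupies the slot $x_{1,k_0}$ (rather than $x_{2,k_0+1}$) in the reduced relation. One should also confirm the precise restriction of $R_k$ at the identity summarized above, namely that every term containing $y_{1,2}$ or $y_{k,k+1}$ vanishes and the remaining two terms survive with coefficient $\pm1$. I note that only the conditions $x_{i,k}\neq0$ and $x_{i+1,k+1}=0$ actually enter this argument; the remaining two vanishing hypotheses in the statement are not needed for this particular deduction.
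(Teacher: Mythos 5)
Your proof is correct and follows essentially the same route as the paper: both reduce to $i=1$ by cyclic symmetry (with the membership $A\in L_{C_n}^\perp$ forcing $3\le k\le n-1$ so that $R_k$ is available) and then evaluate the bihomogeneous relation $R_k\in I_n$ at the point $(A,\Id)$ to contradict $x_{1,k}\neq 0$. The only difference is the order of substitution --- the paper first plugs in the vanishing $x$-entries, leaving the single term $y_{2,2}y_{k+1,k+1}x_{1,k}^2$ and concluding $y_{2,2}y_{k+1,k+1}=0$, whereas you first set $y=\Id$, leaving $x_{1,k}^2-x_{2,k+1}^2$ --- which is also why you correctly observe that only the hypotheses $x_{i,k}\neq 0$ and $x_{i+1,k+1}=0$ are needed in your variant.
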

\begin{proof}
Without loss of generality, we may assume $i=1$. If not, we can make a cyclic shift to get to this situation. Since $A\in L_{C_n}^\perp$, we have $x_{1,1}=x_{1,2}=x_{1,n}=0$. Thus, $3\le k\le n-1$.

We can use the equation $R_k\in I_n$ which holds for any point $(A,B)\in \Gamma_n$. After plugging in $x_{1,1}=x_{1,2}=x_{1,n}=0$, there will be only the term $y_{2,2}y_{k+1,k+1}x_{1,k}^2$ left.

Since $x_{1,k}\neq 0$, we must have $y_{2,2}y_{k+1,k+1}=0$, which implies that $B\neq\Id$.
\end{proof}

\begin{Lemma}\label{existence-of-2x2}
Let $A\in L_{C_n}^{-1}\cap L_{C_n}^\perp$ be a matrix. Then there exist indices $1\le i,k\le n$, such that $x_{i,k}\neq 0$ and $x_{i,k+1}=x_{i+1,k}=x_{i+1,k+1}=0$ (indices are taken modulo $n)$.  
\end{Lemma}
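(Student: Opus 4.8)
The plan is to translate the statement into a purely combinatorial question about the support of $A$ and then feed in one algebraic constraint coming from $A\in L_{C_n}^{-1}$. Write $T=\{(i,k):x_{i,k}\neq 0\}\subseteq(\ZZ/n)^2$ for the support of $A$; it is symmetric under $(i,k)\mapsto(k,i)$ because $A$ is symmetric. Since $A\in L_{C_n}^\perp$, the entries of $A$ on the diagonal and on the cycle edges vanish, so $T$ avoids the three ``central'' diagonals: in every row $i$ the columns $i-1,i,i+1$ carry zeros (indices mod $n$). The lemma then asks for a \emph{corner} of $T$, that is, a point $(i,k)\in T$ whose right, lower, and lower-right neighbours $(i,k+1),(i+1,k),(i+1,k+1)$ all lie outside $T$.

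First I would record why the diagonal-avoidance alone cannot suffice, so that the algebraic hypothesis is genuinely needed. A full ``offset diagonal'' $T=\{(i,i+d):i\in\ZZ/n\}$ for a fixed $2\le d\le n-2$ is symmetric and avoids the three central diagonals, yet has no corner, since the lower-right neighbour of $(i,i+d)$ is $(i+1,i+1+d)$, again on the diagonal. Hence I must use $A\in L_{C_n}^{-1}$. The key input is that every off-diagonal \emph{arc-block} of $A$ has rank at most $2$: if $P,Q$ partition $[n]$ into two complementary cyclic intervals, then the cut of $C_n$ between them crosses exactly two edges, so for invertible $K\in L_{C_n}$ the block $K(Q,P)$ has at most two nonzero entries and hence rank $\le 2$; by the nullity theorem $\rank A(Q,P)=\rank K(Q,P)\le 2$ for $A=K^{-1}$, and because $\rank\le 2$ is a closed condition it persists for all $A\in L_{C_n}^{-1}$. (These $3\times3$ minor conditions sit among the generators of the ideal from \cite{conner2023sullivant}, in the same spirit as the $3\times3$ minors used above to produce $R_k$.)

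Next I would isolate the corner by an extremal choice. Let $M$ be the largest offset $k-i\bmod n$ attained on $T$, so $2\le M\le n-2$, put $S_M=\{i:(i,i+M)\in T\}$, and choose $i_0\in S_M$ with $i_0+1\notin S_M$. Such $i_0$ exists because $S_M\neq\ZZ/n$: a full offset-$M$ diagonal would, after a suitable cyclic cut, drop at least three of its entries into a single arc-block as a $3\times3$ sub-permutation, contradicting the rank bound (the finitely many smallest $n$ being checked by hand). For the candidate $(i_0,i_0+M)$ the right neighbour has offset $M+1$ and is therefore absent from $T$ by maximality of $M$ (or is a forbidden diagonal entry), and the lower-right neighbour $(i_0+1,i_0+1+M)$ is absent precisely because $i_0+1\notin S_M$. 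Thus only the lower neighbour $(i_0+1,i_0+M)$, sitting at offset $M-1$, can obstruct the corner.

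The main obstacle is exactly this lower neighbour: maximality of the offset gives no control one diagonal down. I expect to resolve it by descending inside the column $c=i_0+M$. Each entry of the vertical run $i_0,i_0+1,\dots$ in column $c$ has a vanishing right neighbour (offset $M+1$ for the top entry, and offset $M$ killed by $i_0+1\notin S_M$, and so on downward), so I would pass to the \emph{bottom} of the maximal such run, where the lower neighbour is automatically absent from $T$, and then invoke the rank $\le 2$ bound on the $2\times2$ and $3\times3$ arc-sub-blocks straddling columns $c$ and $c+1$ to force the remaining right and lower-right neighbours to vanish: any nonzero entry surviving there would combine with the entries already present in column $c$ to yield a rank-$3$ arc-block. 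Choosing the two arcs so that the forbidden configuration is a genuine rank-$3$ block while correctly tracking the mod-$n$ wraparound is the delicate part, and I would organise it as a short case analysis on where the maximal vertical run terminates.
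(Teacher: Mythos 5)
Your overall skeleton (extremal offset, vanishing $3\times3$ minors of arc blocks, descent to the bottom of a run of nonzero entries) is, up to transposition, the same as the paper's: the paper picks the nonzero entry \emph{closest} to the diagonal, which is exactly the transpose of your maximal-offset entry, and walks along its row rather than down its column. But your closing step has a genuine gap. All the nonzero entries you propose to feed into the rank bound lie in just two columns, $c$ and $c+1$ (the run in column $c$ plus the hypothetical surviving neighbour), and a matrix whose support lies in two columns has rank at most $2$; hence \emph{no} $3\times3$ minor of such a configuration can be nonzero, and no arc block ``straddling columns $c$ and $c+1$'' will produce the contradiction you want. Moreover, the rank tool you set up --- blocks $A(Q,P)$ with $P,Q$ \emph{disjoint} complementary cyclic intervals --- cannot even express the minors that are needed: the only useful third column is column $i_0$, which contains the mirror entry $x_{c,i_0}=x_{i_0,c}\neq 0$, and then both the row set and the column set of the minor contain $i_0$ and $c$, so they do not fit into disjoint complementary intervals; one needs the \emph{overlapping} arc blocks $A(\{i_0,\dots,c\},\{c,\dots,i_0\})$ of the paper. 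The missing idea, which is precisely how the paper concludes, is to use the symmetry of $A$ to bring in the transposed extremal entry as a third column: the paper's minor $\det A(\{1,l,j\},\{j,m,1\})$ is triangular with the two mirror entries $x_{1,j}$ and $x_{j,1}$ in opposite corners, hence equals $x_{1,j}x_{l,m}x_{j,1}$, and its vanishing kills \emph{all} rows $2,\dots,j-1$ of $A$ at once, after which the walk along row $1$ terminates at a corner. In your notation: for any $r$ with $i_0<r<c$, the minor with rows $\{i_0,r,c\}$ and columns $\{i_0,c,c+1\}$ equals $x_{i_0,c}\,x_{c,i_0}\,x_{r,c+1}$ (using $x_{i_0,i_0}=x_{c,c}=x_{c,c+1}=0$ from $L_{C_n}^\perp$ and $x_{i_0,c+1}=0$ from maximality), and it lies in an overlapping arc block of rank $\le 2$; this forces column $c+1$ to vanish on all rows strictly between $i_0$ and $c$, which is exactly the statement your descent needs.

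A secondary gap: your argument that $S_M\neq\ZZ/n$ (three entries of a full offset-$M$ diagonal forming a triangular minor inside one arc block) requires the three rows to fit in a window of length $<n-M$, which is impossible when $M=n-2$; this is not a small-$n$ issue, since $M=n-2$ can be the maximal offset for arbitrarily large $n$, so ``checking the smallest $n$ by hand'' does not repair it. In the corrected argument this step becomes unnecessary: once column $c+1$ is known to vanish on rows strictly between $i_0$ and $c$, the bottom of the run in column $c$ is a corner regardless of whether $i_0+1\in S_M$ (and in fact one gets $i_0+1\notin S_M$ for free, so no separate exclusion of full diagonals is needed).
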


\begin{proof}
We are working in the projective space, thus $A\neq 0$. We find the non-zero entry $x_{i,j}$ of the matrix $A$ with the smallest difference $j-i$, where the difference is taken modulo $n$. In other words, we find the non-zero entry that is ``closest" to the main diagonal. Without loss of generality, we may assume $i=1$. 

Consider a submatrix $A(\{1,2\dots,j\},\{j,j+1,\dots,n,1\})$. Since $A\in L_{C_n}^{-1}$, the rank of this matrix is at most 2 (all of its $3\times 3$ minors are 0). However, we have $x_{1,1}=x_{1,2}=x_{1,3}=\dots=x_{1,j-1}=0$, because $x_{1,j}$ was the nonzero entry which is closest to the main diagonal. Thus, the last column (indexed by $1$) of this submatrix has only one non-zero element. Analogously, in the first column (indexed by $j$) only the first element is non-zero. 

This means that the only option for $A(\{1,2\dots,j\},\{j,j+1,\dots,n,1\})$ to be of rank 2 is that all other rows, except the first one and the last one, are 0.

Moreover, $x_{l,l'}=0$ also for any $2\le l,l'\le j-1$, because $x_{1,j}$ is the non-zero entry which is the closest to the main diagonal. Thus, the rows from 2 to $j-1$ of the matrix $A$ are all zero. 

Consider the smallest index $j'>j$ such that $x_{1,j'}=0$. Such index exists, since clearly $x_{1,n}=0$. Now it is sufficient to choose $i=1$, $k=j'-1$ to find the indices $i,k$ from the statement of the lemma.
\end{proof}

\begin{proof}[Proof of Theorem \ref{noidentity}]
Consider any matrix $A\in L_{C_n}^{-1}\cap L_{C_n}^\perp$. By Lemma \ref{existence-of-2x2}, there exist indices $1\le i,k\le n$, such that $x_{i,k}\neq 0$ and $x_{i,k+1}=x_{i+1,k}=x_{i+1,k+1}=0$ (indices are taken modulo $n)$. However, by Lemma \ref{2x2-is-good} this means that the point $(A,\Id)\not\in\Gamma_n$. This concludes the theorem. 
\end{proof}

\section{Smoothness}\label{smoothness}

In this section, we will show that the isolated points in the intersection $L_{C_n}^{-1}\cap(\Id+L_{C_n}^\perp)$ are smooth, and therefore, they are reduced. This will imply that the ML-degree of $L_{C_n}$ is the number of points in the intersection.

In general, we will show the smoothness of an isolated point by proving that the Jacobian matrix evaluated at the point is full-rank, i.e. of rank $\binom {n+1}{2}$.

The ideal of $L_{C_n}^{-1}\cap(\Id+L_{C_n}^\perp)$ is generated by specific $3\times 3$ minors, by \cite[Theorem 1.1]{conner2023sullivant}, and the equations $x_{i,i+1}=0$, $x_{i,i}=1$ for $1\le i\le n$.

Obviously, the partial derivations of the equations $x_{i,i+1}=0$ and $x_{i,i}=1$ are vectors $e_{i,i+1}$ and $e_{i,i}$. Thus, to show the smoothness of a point it is sufficient to show that the submatrix of the Jacobian matrix formed by partial derivations of $3\times 3$ minors by variables $x_{i,j}$ where $2\le |i-j|\le n-2$ is of full-rank, i.e. of rank $\binom {n+1}{2}-2n$.

Let us denote by $\TT(A)$ the vector space generated by the rows of a Jacobian matrix at the point $A\in L_{C_n}^{-1}\cap(\Id+L_{C_n}^\perp)$. Thus, we want to prove that $\TT(A)=S^2(\CC^n)$.

We introduce here the group actions we will use. Consider the set $\mathcal D_n^{\pm}$ of all $n\times n$ diagonal matrices whose diagonal entries are equal to $\pm 1$. This is a group under multiplication and for any $D\in\mathcal D_n^\pm$, we have $D=D^{-1}$. This group acts on $S^2(\CC^n)$ by conjugation. Clearly, the spaces $L_{C_n}$ and $L_{C_n}^{\perp}$ are invariant with respect to this group action. In addition, the set $L_{C_n}^{-1}$ is also invariant. Thus, the intersection $L_{C_n}^{-1}\cap (\Id+L_{C_n}^\perp)$ is also invariant.  

Let us consider another group action, namely the action by cyclic shift. For this, we consider the following matrices:

$$N_n^+:=\begin{pmatrix}
0&1&0&\dots&0&0&0\\
0&0&1&\dots&0&0&0\\
0&0&0&\dots&0&0&0\\
\vdots&\vdots&\vdots&\ddots&\vdots&\vdots&\vdots\\
0&0&0&\dots&0&1&0\\
0&0&0&\dots&0&0&1\\
1&0&0&\dots&0&0&0
\end{pmatrix}, \
N_n^-:=\begin{pmatrix}
0&1&0&\dots&0&0&0\\
0&0&1&\dots&0&0&0\\
0&0&0&\dots&0&0&0\\
\vdots&\vdots&\vdots&\ddots&\vdots&\vdots&\vdots\\
0&0&0&\dots&0&1&0\\
0&0&0&\dots&0&0&1\\
-1&0&0&\dots&0&0&0
\end{pmatrix}.
$$

Note that $(N_n^+)^{-1}=(N_n^+)^T$ and $(N_n^-)^{-1}=(N_n^-)^T$. Thus, (the groups generated by) these matrices act on $S^2(\CC^n)$ by conjugation and $L_{C_n}$ is invariant with respect to that action.
We say that a matrix $A$ is \textit{$N_n^+$-invariant} if $N_n^+A(N_n^+)^{-1}=A$. Analogously, $A$ is \textit{$N_n^-$-invariant} if $N_n^-A(N_n^-)^{-1}=A$. 

Let us denote by $M_n(x),M_n^+(x),M_n^-(x)$ the following matrices:

$$M_n(x):=\begin{pmatrix}
1&x&0&0&\dots&0&0\\
x&1&x&0&\dots&0&0\\
0&x&1&x&\dots&0&0\\
0&0&x&1&\dots&0&0\\
\vdots&\vdots&\vdots&\vdots&\ddots&\vdots&\vdots\\
0&0&0&0&\dots&1&x\\
0&0&0&0&\dots&x&1
\end{pmatrix},$$

$$M_n^+(x):=\begin{pmatrix}
1&x&0&0&\dots&0&x\\
x&1&x&0&\dots&0&0\\
0&x&1&x&\dots&0&0\\
0&0&x&1&\dots&0&0\\
\vdots&\vdots&\vdots&\vdots&\ddots&\vdots&\vdots\\
0&0&0&0&\dots&1&x\\
x&0&0&0&\dots&x&1
\end{pmatrix},\ M_n^-(x):=\begin{pmatrix}
1&x&0&0&\dots&0&-x\\
x&1&x&0&\dots&0&0\\
0&x&1&x&\dots&0&0\\
0&0&x&1&\dots&0&0\\
\vdots&\vdots&\vdots&\vdots&\ddots&\vdots&\vdots\\
0&0&0&0&\dots&1&x\\
-x&0&0&0&\dots&x&1
\end{pmatrix}.$$

We denote $P_n(x):=\det(M_n(x))$. 

The points in the intersection $L_{C_n}^{-1}\cap(\Id+L_{C_n}^\perp)$,  were characterized in \cite[Lemma 3.11]{MLbound}:

\begin{Lemma}\label{odd-intersection}
We have the following equalities:
$$L_{C_{2n+1}}^{-1}\cap(\Id+L_{C_{2n+1}}^{\perp})=\{\Id\}\cup \{DM_{2n+1}^+(x)^{-1}D : D\in\mathcal D_{2n+1}^\pm;P_{n-1}(x)+xP_{n-2}(x)=0\}.$$
$$L_{C_{2n}}^{-1}\cap(\Id+L_{C_{2n}}^{\perp})=\{\Id\}\cup \{DM_{2n}^+(x)^{-1}D : D\in\mathcal D_{2n}^\pm;P_{n-1}(x)-x^2P_{n-3}(x)=0\}\cup $$
$$\cup \{DM_{2n}^-(x)^{-1}D : D\in\mathcal D_{2n}^\pm;P_{n-2}(x)=0\}\cup \{D\mathcal{C}_{2n}D : D\in\mathcal{D}_{2n+1}^\pm\},$$
where
$\mathcal{C}_n$ is the checkerboard matrix, i.e. $$(\mathcal C_n)_{i,j}=\begin{cases}
 0,&\text{ if } i+j \text{ is odd}  \\
 1,&\text{ if } i+j \text{ is even}.
\end{cases}$$

\end{Lemma}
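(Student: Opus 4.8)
The plan is to describe every point of the intersection as the inverse of a concentration matrix, read off the shape of that matrix, and then isolate the single scalar condition that cuts out the finitely many solutions; the rank-deficient boundary points are handled separately at the end. For a point $A$ that genuinely lies in $\{K^{-1}:K\in L_{C_n},\ \det K\neq 0\}$, write $A=K^{-1}$ with $K\in L_{C_n}$, so that $K$ is the cyclic tridiagonal matrix with diagonal entries $d_i=K_{ii}$ and edge entries $k_i=K_{i,i+1}$ (indices mod $n$). Since $A\in\Id+L_{C_n}^\perp$ we have $A_{ii}=1$ and $A_{i,i+1}=0$. Reading the $(i,i)$-entry of $KA=\Id$ and using that $K$ is supported only on the diagonal and the cycle edges gives at once $d_i=1$, so $K$ has unit diagonal. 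Next, writing $B=A=K^{-1}$ and expanding the off-diagonal identities $(KB)_{i,i+1}=0$ and $(KB)_{i+1,i}=0$ with $B_{ii}=1$ and $B_{i,i+1}=0$, the first yields $B_{i-1,i+1}=-k_i/k_{i-1}$ and the second yields $B_{i,i+2}=-k_i/k_{i+1}$; comparing the latter with the first relation shifted by one index (which reads $B_{i,i+2}=-k_{i+1}/k_i$) forces $k_i^2=k_{i+1}^2$ for every $i$. Hence all edge entries share a common magnitude $x$.

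With this in hand I would normalize using the two group actions of the paper. Conjugation by a suitable $D\in\mathcal{D}_n^{\pm}$ rescales $k_i$ by $D_{ii}D_{i+1,i+1}$ and preserves the intersection, while a cyclic shift by $N_n^{\pm}$ rotates the edges; together they bring $K$ to either $M_n^+(x)$ or $M_n^-(x)$. The two cases are separated by the $\mathcal{D}_n^{\pm}$-invariant sign $\prod_i\sgn(k_i)$, and since replacing $x$ by $-x$ multiplies this sign by $(-1)^n$, the two sign-classes merge for odd $n$ (leaving only $M_n^+$) but remain distinct for even $n$ (keeping both $M_n^+$ and $M_n^-$); this is exactly the parity split in the statement.

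It then remains to find the admissible $x$. Because $M_n^+(x)=\Id+x\bigl(N_n^++(N_n^+)^{-1}\bigr)$ is circulant (and $M_n^-(x)$ is its anti-periodic analogue), $B=K^{-1}$ has constant diagonal $b_0$ and constant first off-diagonal $b_1$, and the diagonal of $KB=\Id$ reads $2xb_1+b_0=1$; thus the single condition $b_1=0$ already forces $b_0=1$, and everything reduces to the vanishing of one inverse entry, i.e.\ $\operatorname{cof}_{1,2}(M_n^{\pm}(x))=0$. I would evaluate this cofactor through the three-term recurrence $P_k=P_{k-1}-x^2P_{k-2}$ for the path determinants $P_k(x)$ — equivalently by diagonalizing the circulant via the discrete Fourier transform and folding the eigenvalue sum under $k\leftrightarrow n-k$. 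This folding is what halves the index and should produce precisely $P_{n-1}(x)+xP_{n-2}(x)$ for $C_{2n+1}$, and $P_{n-1}(x)-x^2P_{n-3}(x)$ (periodic) and $P_{n-2}(x)$ (anti-periodic) for $C_{2n}$.

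Finally I would treat the rank-deficient points, where the analysis above breaks down because $A$ is not an honest inverse. Here I would switch to the explicit description of $L_{C_n}^{-1}$ by the $3\times 3$ minors of \cite{conner2023sullivant} together with the equations $x_{i,i}=1$ and $x_{i,i+1}=0$, and classify the small-rank solutions directly. A parity count on a rank-two symmetric solution with unit diagonal and vanishing cycle off-diagonals forces the checkerboard pattern $\mathcal{C}_{2n}=\tfrac12(\mathbf{1}\mathbf{1}^\top+ww^\top)$ with $w_i=(-1)^i$, whose $(1,n)$-entry vanishes only when $n$ is even, explaining why this orbit appears for $C_{2n}$ but not for $C_{2n+1}$. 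I expect this last step to be the real obstacle: the generic inverse analysis is clean, but controlling the closure $L_{C_n}^{-1}$ and proving that the checkerboard family is the \emph{only} set of boundary points in the intersection requires a genuine argument about which rank-deficient matrices of $\Id+L_{C_n}^{\perp}$ satisfy the defining minors. The secondary nuisances are the sign and parity bookkeeping in the normalization step and the exact Chebyshev-type factorization matching the half-indices of the $P$-polynomials.
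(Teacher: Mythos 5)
Your attempt cannot really be compared line-by-line with the paper's argument, because the paper does not prove this lemma at all: it is imported verbatim from \cite[Lemma 3.11]{MLbound}, and the only ``proof'' in the text is that citation. Judged as a self-contained argument, the part of your proposal dealing with invertible points is essentially sound. Since any \emph{invertible} element of the affine closure $L_{C_n}^{-1}$ is automatically the inverse of an element of $L_{C_n}$ (if $K_m^{-1}\to A$ with $A$ invertible, then $K_m\to A^{-1}\in L_{C_n}$), your reading of $(KA)_{ii}=1$, $(KA)_{i,i+1}=0$, $(KA)_{i+1,i}=0$ correctly forces a unit diagonal and $k_i^2=k_{i+1}^2$ (with the small extra observation that the relations $k_i=-k_{i+1}A_{i,i+2}$ and $k_{i+1}=-k_iA_{i,i+2}$ show that one vanishing edge entry forces all of them to vanish, giving $\Id$); the $\mathcal D_n^\pm$-normalization governed by the invariant $\prod_i\sgn(k_i)$, and the merging of the two sign classes for odd $n$ versus their separation for even $n$, is exactly the right explanation of the parity split; and the circulant (resp.\ anti-periodic) symmetry does reduce membership in $\Id+L_{C_n}^\perp$ to the vanishing of the single $(1,2)$ cofactor of $M_n^\pm(x)$. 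However, you never carry out the computation identifying that cofactor with $P_{n-1}(x)+xP_{n-2}(x)$, resp.\ $P_{n-1}(x)-x^2P_{n-3}(x)$ and $P_{n-2}(x)$; you only assert that the folding ``should produce'' these polynomials, so even the generic half of the lemma is not finished.

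The genuine gap is the one you flag yourself: the singular points. Beyond the generic analysis, the entire content of the lemma is that the singular matrices in $L_{C_n}^{-1}\cap(\Id+L_{C_n}^\perp)$ are exactly the orbit $\{D\mathcal C_nD : D\in\mathcal D_n^\pm\}$ when $n$ is even, and that there are \emph{none} when $n$ is odd. Your plan --- ``a parity count on a rank-two symmetric solution'' --- presupposes that every singular point of the intersection has rank two, which is unjustified: a point of the closure satisfying the Sullivant--Talaska minors of \cite{conner2023sullivant} could a priori have any rank between $2$ and $n-1$ (for $C_4$, for instance, one must rule out rank-three matrices with unit diagonal and vanishing edge entries, and nothing in your argument does so). You also do not verify the reverse inclusion, namely that $\mathcal C_{2n}$ itself satisfies those minors and hence lies in $L_{C_{2n}}^{-1}$. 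So as it stands, the proposal establishes one inclusion for invertible points, sketches but does not complete the polynomial bookkeeping, and leaves open the boundary classification --- which is precisely the hard core of the result proved in \cite[Lemma 3.11]{MLbound}.
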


We can see that there are only a few types of points up to the action of the group $\mathcal D^+_n$. We will look at each of them separately.

\subsection{The identity}
\begin{Proposition}\label{id}
The $\Id$ is smooth in the intersection $L_{C_{n}}^{-1}\cap(\Id+L_{C_{n}}^\perp)$.
\end{Proposition}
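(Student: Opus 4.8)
The plan is to show that the Jacobian at $\Id$ has full rank by exhibiting, for each off-diagonal variable $x_{i,j}$ with $2\le|i-j|\le n-2$, an explicit $3\times 3$ minor generator whose gradient at $\Id$ is (up to nonzero scalar) the standard basis vector $e_{i,j}$, while having zero partial derivatives with respect to every other such variable. Since the gradients of the linear equations $x_{i,i}-1$ and $x_{i,i+1}$ already supply the basis vectors $e_{i,i}$ and $e_{i,i+1}$, it suffices to recover the remaining $\binom{n+1}{2}-2n$ directions $e_{i,j}$ from the minors.

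First I would recall how a generator minor behaves when differentiated at $\Id$. At the identity, $x_{i,j}=\delta_{i,j}$, so in any $3\times 3$ minor $\det(M(I,J))$ the only surviving monomials after differentiation are products of diagonal entries (each equal to $1$) and at most one off-diagonal factor. Concretely, $\partial_{x_{a,b}}\det(M(I,J))$ evaluated at $\Id$ is nonzero only when the $(a,b)$ entry sits in the minor and its complementary $2\times 2$ minor, evaluated at $\Id$, is a product of diagonal ones. So the key step is to choose, for each target $(i,j)$ with $2\le|i-j|\le n-2$, a minor in which $x_{i,j}$ appears multiplied by a $2\times 2$ identity block, and in which no \emph{other} off-diagonal variable of the relevant range appears with a surviving (all-diagonal) cofactor.

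The natural candidate is the minor $\det(M(\{i,i',j\},\{i,j',j\}))$ for suitable neighbors $i',j'$ chosen so that the supporting $2\times 2$ block lies on the diagonal; expanding at $\Id$ one gets a leading term $\pm x_{i,j}$ plus correction terms, and one checks that every correction term either vanishes at $\Id$ or differentiates to a basis vector already accounted for by the linear equations (the $x_{i,i+1}$ and $x_{i,i}$ directions). Carrying this out index-by-index produces a triangular-type dependence: ordering the variables $x_{i,j}$ by the diagonal distance $|i-j|$, each chosen minor recovers its target $e_{i,j}$ modulo vectors already in $\TT(\Id)$, so by induction on the distance the whole space $S^2(\CC^n)$ is spanned. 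Thus $\TT(\Id)=S^2(\CC^n)$ and $\Id$ is smooth.

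The main obstacle is the bookkeeping of which off-diagonal variables survive differentiation in each chosen minor, and guaranteeing the triangular structure is genuinely upper-triangular with nonzero diagonal so the rank argument closes. In particular one must confirm that the generator minors from \cite{conner2023sullivant} actually include minors covering \emph{all} distances $2\le|i-j|\le n-2$ (including the wrap-around entries near $(1,n)$), rather than only a restricted family; verifying this coverage, and that no unwanted off-diagonal monomial produces a spurious surviving cofactor at $\Id$, is where the real care is needed.
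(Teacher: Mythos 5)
Your high-level strategy coincides with the paper's: produce, for each non-edge entry $(i,j)$, a generating $3\times 3$ minor whose gradient at $\Id$ is (up to sign) a single standard basis vector, and let cyclic symmetry supply the rest. The genuine gap is that your candidate minor does not have the property you claim. At the identity, a $2\times 2$ block $M(\{a,b\},\{c,d\})$ has nonzero determinant if and only if $\{a,b\}=\{c,d\}$, i.e.\ if and only if it is a principal block. In your minor $\det M(\{i,i',j\},\{i,j',j\})$ the target variable $x_{i,j}$ occupies the positions (row $i$, column $j$) and (row $j$, column $i$), whose complementary blocks are $M(\{i',j\},\{i,j'\})$ and $M(\{i,i'\},\{j',j\})$. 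Since $i'\neq i$, $j'\neq j$, and $i,j$ are neither equal nor adjacent, neither block is principal, \emph{for every} choice of the neighbors $i',j'$. Hence $\partial_{x_{i,j}}$ of your minor vanishes at $\Id$: the claimed leading term $\pm x_{i,j}$ is simply absent. What survives instead is the middle entry paired with the principal block supported on $\{i,j\}$: for instance, for rows $\{i,i+1,j\}$ and columns $\{i,j,j+1\}$ the gradient at $\Id$ is exactly $-e_{i+1,j+1}$. So each of your minors recovers a \emph{shifted} basis vector rather than $e_{i,j}$, and the triangular induction on the diagonal distance that you describe collapses at its first step.

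The repair is either to re-index (use the minor labeled by $(i-1,j-1)$ to hit $e_{i,j}$), or, as the paper does, to change the shape of the minor so that deleting the target's row and column leaves a principal block: take rows $\{i-1,i,i+1\}$ and columns $\{i-1,i+1,j\}$, so that $x_{i,j}$ appears exactly once and its cofactor is the principal block on $\{i-1,i+1\}$. One then checks that \emph{all} other partial derivatives vanish at $\Id$, so the gradient is exactly $-e_{i,j}$ and no induction or triangular bookkeeping is needed at all; this is the paper's minor $\delta(1,2,3)(1,3,k)$ with targets $x_{2,k}$, $4\le k\le n$, followed by cyclic shifts. Your closing worry about ideal membership is substantive and must also be settled in any repair: by \cite{conner2023sullivant}, the available generators are the $3\times 3$ minors of submatrices $M(I,J)$ with $I$ and $J$ contained in complementary cyclic intervals sharing their endpoints. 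The paper's minors pass this test ($\{1,2,3\}\subset[1,3]$ and $\{1,3,k\}\subset[3,1]$), and so does the re-indexed family above, but for example your minor with rows $\{i,i+1,j\}$ and columns $\{i,j-1,j\}$ fits in no such pair of intervals, so it cannot be used at all.
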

\begin{proof}
For $4\le k\le n$ we consider the following $3\times 3$ minor from the ideal of $L_{C_n}^{-1}$: $$\delta(1,2,3)(1,3,k):=\det\begin{pmatrix}
x_{1,1}&x_{1,3}&x_{1,k}\\
x_{1,2}&x_{2,3}&x_{2,k}\\
x_{1,3}&x_{3,3}&x_{3,k}\\
\end{pmatrix}$$

If we consider the partial derivations of this minor and evaluate at the identity, it is clear that $\frac{\partial(\delta(1,2,3)(1,3,k)))}{\partial x_{2,k}}=-x_{1,1}x_{3,3}$ evaluated at the identity is -1 and all the other partial derivations evaluated at the identity are 0. 

Thus, we see that by taking partial derivations of $\delta(1,2,3)(1,3,k)$ we obtain a row of the Jacobian matrix which is equal to $-e_{2,k}$. By taking a cyclic shift, we obtain rows $-e_{i,j}$ for any $i,j$ with $2\le |i-j|\le n-2$. Therefore, the Jacobian matrix at the identity has full rank.
\end{proof}
\subsection{The checkerboard matrices}

\begin{Proposition}\label{checkerboardmatr}
The points of the form $D\mathcal C_nD$ for any $D\in \mathcal D^{\pm}_{n}$ are smooth in the intersection  $L_{C_{n}}^{-1}\cap(\Id+L_{C_{n}}^\perp)$ for everyeven $n$.
\end{Proposition}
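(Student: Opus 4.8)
The plan is to mirror the strategy used for the identity in Proposition~\ref{id}: exhibit enough $3\times 3$ minors whose gradients, evaluated at the checkerboard matrix $\mathcal C_n$, span the off-diagonal part $e_{i,j}$ with $2\le|i-j|\le n-2$ of the symmetric-matrix space, and then invoke the $\mathcal D_n^\pm$-action together with cyclic shift to reduce the general point $D\mathcal C_n D$ to the base case $D=\Id$. Concretely, I would first record that since the intersection $L_{C_n}^{-1}\cap(\Id+L_{C_n}^\perp)$ is invariant under conjugation by $\mathcal D_n^\pm$, the tangent space $\TT(D\mathcal C_n D)$ is obtained from $\TT(\mathcal C_n)$ by the corresponding linear change of coordinates; conjugation by a sign diagonal only flips signs of the basis vectors $e_{i,j}$, so $\TT(\mathcal C_n)=S^2(\CC^n)$ immediately yields $\TT(D\mathcal C_n D)=S^2(\CC^n)$ for all $D$. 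Hence it suffices to treat $A=\mathcal C_n$ itself.

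Next I would compute the relevant minor gradients at $\mathcal C_n$. Recall $(\mathcal C_n)_{i,j}=1$ if $i+j$ is even and $0$ otherwise, so $\mathcal C_n$ is a rank-one matrix $vv^T$ with $v=(1,0,1,0,\dots)$. The key point is that for a $3\times 3$ minor $\delta$ of the symmetric matrix of variables, the partial derivative $\partial\delta/\partial x_{p,q}$ evaluated at $A$ is (up to sign) the complementary $2\times 2$ minor of $A$; because $\mathcal C_n$ has rank one, \emph{most} of these $2\times2$ minors vanish, which is exactly the degeneracy that makes the gradient of a single well-chosen minor collapse to a single coordinate vector $\pm e_{i,j}$, just as happened at the identity. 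I would search for a suitable family of minors $\delta(a,b,c)(d,e,f)$ whose row/column index sets are chosen so that, at $\mathcal C_n$, the only surviving partial derivative is the one with respect to a target variable $x_{i,j}$ with $2\le|i-j|\le n-2$; a natural candidate, parallel to $\delta(1,2,3)(1,3,k)$ in Proposition~\ref{id}, is to pick the three rows and three columns so that two of the six indices have a fixed parity forcing all but one $2\times2$ complementary minor to contain a repeated parity-zero pattern.

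The main obstacle I anticipate is that, unlike the identity (where the surviving $2\times2$ minor was the generic $-x_{1,1}x_{3,3}$ diagonal term), at a rank-one point the surviving complementary minor can itself vanish, so a single minor may not isolate a clean $\pm e_{i,j}$; I expect to need a short case analysis on the parities of $i$ and $j$, and possibly to take a linear combination of two minors (or use minors whose index sets straddle the parity classes) to produce each target $e_{i,j}$. Once a full set of target vectors $e_{i,j}$ for $2\le|i-j|\le n-2$ is produced—combined with the derivatives $e_{i,i},e_{i,i+1}$ coming from the linear equations $x_{i,i}=1$, $x_{i,i+1}=0$ of $\Id+L_{C_n}^\perp$—the gradients span all of $S^2(\CC^n)$, the Jacobian has full rank $\binom{n+1}{2}$, and smoothness follows. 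The parity structure should also explain why this argument is restricted to even $n$, matching the hypothesis of the proposition.
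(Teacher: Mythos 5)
Your first step---using the $\mathcal D_n^\pm$-action to reduce to the single point $\mathcal C_n$---is correct and is exactly how the paper's proof begins. However, the computational core of your plan rests on a false premise: $\mathcal C_n$ is \emph{not} the rank-one matrix $vv^T$ with $v=(1,0,1,0,\dots)$. Since $(\mathcal C_n)_{i,j}=1$ whenever $i$ and $j$ have the \emph{same} parity, one has for instance $(\mathcal C_n)_{2,4}=1\neq v_2v_4=0$; in fact $\mathcal C_n=vv^T+ww^T$ with $w=(0,1,0,1,\dots)$, a matrix of rank two. This is not a cosmetic slip. The derivative of a $3\times 3$ minor with respect to an entry is a signed complementary $2\times 2$ minor, and the $2\times 2$ minors of $\mathcal C_n$ that survive are exactly those whose two row indices have opposite parities and whose two column indices have opposite parities. (Had $\mathcal C_n$ really been rank one, \emph{every} $2\times 2$ minor would vanish and all the gradients you propose to use would be identically zero, killing the approach outright.) A parity count now shows that your hoped-for collapse to a single coordinate vector cannot happen for the coordinates that matter: if in a minor on rows $\{a,b,c\}$ and columns $\{d,e,f\}$ (each variable occurring once) the partial at position $(a,d)$ is nonzero, so that $e\not\equiv f \pmod 2$, then $d$ agrees in parity with exactly one of $e,f$, hence exactly one of the positions $(a,e)$, $(a,f)$ also has nonvanishing partial. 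So every such gradient involves at least two coordinates $e_{i,j}$ with $i-j$ odd at once, and no ``well-chosen single minor'' isolates a clean $\pm e_{i,j}$ of odd difference.

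This is precisely where the paper's proof has to work hard, and where your proposal---which hedges with ``a short case analysis \dots and possibly a linear combination of two minors''---leaves the actual content open. The even-difference coordinates do come out essentially as you envision ($\delta(1,2,3)(1,4,3)$ gives $-2e_{1,3}$, and a short induction using $\delta(1,2,3)(1,4,k)$ gives all $e_{i,j}$ with $i-j$ even). But for the odd-difference coordinates the paper must extract, from three different families of minors, relation vectors of the shapes $e_{1,2k}+e_{2,2k+1}$, $\;e_{1,2k}-e_{3,2k+2}$, and $e_{1,2k}-e_{1,2k+2}-e_{3,2k}+e_{3,2k+2}$, run an induction establishing $(k-1)e_{1,4}-e_{1,2k}\in\TT(\mathcal C_n)$, and finally wrap these relations all the way around the cycle so that the accumulated scalar in front of $e_{1,4}$ is nonzero; only then do cyclic shifts yield all odd-difference $e_{i,j}$. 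This global spanning argument---showing that a system of two-term vectors, each tying together two unknown coordinates, has full rank around the cycle---is the real substance of the proposition, and it cannot be replaced by pairwise combinations of minors near a single index. As written, your proposal is a plan whose guiding heuristic is incorrect and whose hard step is missing.
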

\begin{proof}
Since the map given by $A\mapsto DAD$ for any matrix $D\in \mathcal D^{\pm}_{n}$ is regular and it preserves the intersection $L_{C_n}^{-1}\cap(\Id+L_{C_n}^\perp)$, it is sufficient to prove smoothness of a matrix $\mathcal C_{n}$. This will imply the smoothness of any matrix of the form $D\mathcal C_nD$ for $D\in \mathcal D^{\pm}_{n}$.

For any odd $3\le k\le n-1$, we consider the following $3\times 3$ minor from the ideal of $L_{C_n}^{-1}$:$$\delta(1,2,3)(1,4,k):=\det\begin{pmatrix}
x_{1,1}&x_{1,4}&x_{1,k}\\
x_{1,2}&x_{2,4}&x_{2,k}\\
x_{1,3}&x_{3,4}&x_{3,k}\\
\end{pmatrix}$$

Since this $3\times 3$ matrix evaluated at $\mathcal C_{n}$ is $\begin{pmatrix}
1&0&1\\
0&1&0\\
1&0&1\\
\end{pmatrix}$,
 we can easily see that only the partial derivations by the variables $x_{1,1},x_{1,k},x_{1,3},x_{3,k}$ are non-zero. More precisely, 

 $$\frac{\partial(\delta(1,2,3)(1,4,k))}{\delta x_{1,1}}\bigg\rvert_{\mathcal C_n}=-\frac{\partial(\delta(1,2,3)(1,4,k))}{\delta x_{1,k}}\bigg\rvert_{\mathcal C_n}=$$
 $$
 =\frac{\partial(\delta(1,2,3)(1,4,k))}{\delta x_{1,3}}\bigg\rvert_{\mathcal C_n}=-
 \frac{\partial(\delta(1,2,3)(1,4,k))}{\delta x_{3,k}}\bigg\rvert_{\mathcal C_n}=1.$$

This means that by taking partial derivation of this minor we get a row of the Jacobian matrix which is $-e_{1,k}-e_{1,3}+e_{3,k}$. Note that we do not consider the partial derivation by $x_{1,1}$ since we already disregarded all the variables $x_{i,i}$ and $x_{i,i+1}$.

For $k=3$ we get a row $-2e_{1,3}$, and by cyclic shift, we obtain a row $-2e_{i,i+2}$ for any $1\le i\le n$. These rows generate the vector space generated by vectors $e_{i,i+2}$. Thus $$\langle e_{i,i+2},\  1\le i\le n \rangle\subset \TT(\mathcal C_n).$$

Now, we proceed by induction on $j$ and we will show that $$\langle e_{i,i+2j},\  1\le i\le n \rangle\subset \TT(\mathcal C_n).$$ We simply put $k=2j+1$ to obtain a row $-e_{1,2j+1}-e_{1,3}+e_{3,2j+1}$ of the Jacobian matrix. Since, by the induction hypothesis, $e_{1,3},e_{3,2j+1}\in \TT(\mathcal C_n)$, we have that $e_{1,2j+1}\in \TT(\mathcal C_n)$. By cyclic shift, we prove the statement.

Consider a minor $\delta(1,2k+1,n)(2,3,2k)$ for $(n-2)/2\ge k\ge 2$. We do the same trick - we compute the partial derivations, evaluate at the point $\mathcal C_n$ and get a vector from $\TT(\mathcal C_n)$:

$$\delta(1,2k+1,n)(2,3,2k)=\det\begin{pmatrix}
x_{1,2}&x_{1,3}&x_{1,2k}\\
x_{2,2k+1}&x_{3,2k+1}&x_{2k,2k+1}\\
x_{2,n}&x_{3,n}&x_{2k,n}\\
\end{pmatrix}.$$

After evaluating at $\mathcal C_{n}$, we obtain the matrix $\begin{pmatrix}
0&1&0\\
0&1&0\\
1&0&1\\
\end{pmatrix}$.

We can see that the only partial derivations that are non-zero are the partial derivations by $x_{1,2}, x_{1,2k},x_{2,2k+1},x_{2k,2k+1}$. Since we already know that $e_{1,2},e_{2k,2k+1}\in\TT(\mathcal C_n)$, we obtain that $e_{1,2k}+e_{2,2k+1}\in \TT(\mathcal C_n)$.

If we perform a cyclic shift in this argument, we get that also $e_{2,2k+1}+e_{3,2k+2}\in \TT(\mathcal C_n)$. By subtracting the last two vectors, we get $e_{1,2k}-e_{3,2k+2}\in \TT(\mathcal C_n)$.

Next, we consider the minor $\delta(1,2,3)(2k,2k+1,2k+2)$ for $(n-2)/2\ge k\ge 2$. Similarly, we obtain that $e_{1,2k}-e_{1,2k+2}-e_{3,2k}+e_{3,2k+2}\in \TT(\mathcal C_n)$.  

Further, we show by induction on $k$ that $(k-1)e_{1,4}-e_{1,2k}\in \TT(\mathcal C_n)$ for $k\ge 2$. For $k=1$ it holds, because $e_{1,2}\in \TT(\mathcal C_n)$, for $k=2$, it is trivial since the expression is a zero vector.

Assume that $k\ge 2$ and the statement holds for all $k'\le k$.

We have $$e_{1,2k}-e_{1,2k+2}-e_{3,2k}+e_{3,2k+2},e_{1,2k}-e_{3,2k+2},e_{1,2k-2}-e_{3,k}\in \TT(\mathcal C_n).$$
 This yields

$$e_{1,2k}-e_{1,2k+2}-e_{3,2k}+e_{3,2k+2}+e_{1,2k}-e_{3,2k+2}-e_{1,2k-2}+e_{3,2k}=2e_{1,2k}-e_{1,2k-2}-e_{1,2k+2}\in \TT(\mathcal C_n).$$

By induction hypothesis, 
$$(2k-2)e_{1,4}-2e_{1,2k},-(k-2)e_{1,4}+e_{1,2k-2}\in \TT(\mathcal C_n).$$ 
Therefore,
$$2e_{1,2k}-e_{1,2k-2}-e_{1,2k+2}+(2k-2)e_{1,4}-2e_{1,2k}-(k-2)e_{1,4}+e_{1,2k-2}=
ke_{1,4}-e_{1,2k+2}\in\TT(\mathcal C_n).$$ 
 which proves the statement.

Now we plug in $k=(n-1)/2$ to get $((n-1)/2) e_{1,4}-e_{1,n-2}\in \TT(\mathcal C_n)$. In addition, we have $e_{1,n-2}+e_{3,n}\in\TT(\mathcal C_n) $ which implies $((n-1)/2) e_{1,4}-e_{3,n}\in \TT(\mathcal C_n)$. Furthermore, we have $e_{1,4}-e_{2,5}\in \TT(\mathcal C_n)$ and, by cylic shift, we obtain that  $-e_{1,4}+e_{3,n}\in\TT(\mathcal C_n)$.

Finally, by adding $-e_{1,4}+e_{3,n}+((n-1)/2) e_{1,4}-e_{3,n}$, we obtain that $((n+1)/2) e_{1,4}\in \TT(\mathcal C_n)$. This implies $e_{1,2k}\in \TT(\mathcal C_n)$ and, by cyclic shift, $e_{i,j}\in \TT(\mathcal C_n)$ for any $i,j$ with odd difference.

Hence, we obtained that all $e_{i,j}$ belong to $\TT(\mathcal C_n)$, and we can conclude that the point $\mathcal C_n$ is smooth in the intersection.
\end{proof}

\subsection{The other type of matrices}

We start with a lemma, where the indices are taken modulo $n$ as always.

\begin{Lemma}\label{TT}
 For all $j\ge 2$, we have the following inclusions: $$\left\{ \sum_{i=1}^n \alpha_i e_{i,i+j}; \sum_{i=1}^n \alpha_i=0\right\} \subset \TT((M^+_{n}(z))^{-1}),$$
$$\left\{ \sum_{i=1}^n \alpha_i e_{i,i+j}; \sum_{i=1}^{n-j} \alpha_i-\sum_{n-j+1}^n \alpha_i=0\right\} \subset \TT((M^-_{n}(z))^{-1}),
$$

for all $(M^+_{n}(z))^{-1},(M^-_{n}(z))^{-1}\in L_{C_n}^{-1}\cap(\Id+L_{C_n}^\perp)$.
 
\end{Lemma}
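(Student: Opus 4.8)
The plan is to prove both inclusions by the same mechanism, exploiting the cyclic symmetry of the points. Write $A=(M_n^+(z))^{-1}$ (the case $A=(M_n^-(z))^{-1}$ is handled identically up to signs, see the last paragraph). Since $M_n^+(z)$ is $N_n^+$-invariant and inversion commutes with conjugation, $A$ is $N_n^+$-invariant as well; consequently the whole configuration — the variety $L_{C_n}^{-1}$, the affine constraints $x_{i,i}=1,\ x_{i,i+1}=0$, and hence the row span $\TT(A)$ of the Jacobian — is preserved by the linear map $\tau^+$ on $S^2(\CC^n)$ induced by conjugation by $N_n^+$, which sends $e_{p,q}\mapsto e_{p+1,q+1}$ (indices mod $n$). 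Fix a ``level'' $j\ge 2$ and set $U_j=\langle e_{i,i+j}:1\le i\le n\rangle$. Then $\TT(A)\cap U_j$ is $\tau^+$-invariant, and on $U_j$ the operator $\tau^+$ is simply the cyclic shift of the coefficients $\alpha_i$; its fixed line is $\CC\cdot\sum_i e_{i,i+j}$, and the target space $V_j^+:=\{\sum_i\alpha_i e_{i,i+j}:\sum_i\alpha_i=0\}$ is exactly the sum of all non-trivial $\tau^+$-eigenspaces.

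This reduces the statement to producing, for each $j$, a single vector $w_j\in\TT(A)\cap U_j$ whose expansion in the $\tau^+$-eigenbasis is nonzero in every non-trivial eigenspace. Indeed, being $\tau^+$-invariant, $\TT(A)\cap U_j$ is a sum of eigenspaces, so once it contains such a $w_j$ it contains every non-trivial eigenline, i.e. all of $V_j^+$. A convenient choice is a consecutive difference $w_j=e_{i_0,i_0+j}-e_{i_0+1,i_0+1+j}$: its coefficient in the $\lambda$-eigenspace is proportional to $1-\lambda$, which is nonzero for every non-trivial $n$-th root of unity $\lambda$, so the $\tau^+$-orbit of $w_j$ already spans $V_j^+$. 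Thus it suffices to place one such balanced, small-support vector into $\TT(A)$ for each level.

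To produce $w_j$ I will follow the method of Propositions \ref{id} and \ref{checkerboardmatr}: select $3\times 3$ minors $\delta(R)(C)$ from the Conner--Sullivant generators of $I(L_{C_n}^{-1})$, compute their gradients at $A$, and drop the components lying in the cycle levels $0,1,n-1$ (these span $L_{C_n}\subseteq\TT(A)$, hence are free). Because $A$ is a symmetric circulant, $A_{pq}=a_{(q-p)\bmod n}$, every $2\times2$ cofactor occurring in $\partial\delta(R)(C)/\partial x_{p,q}$ depends only on index differences, so choosing $R$ and $C$ along arithmetic progressions forces coincidences among these cofactors. As in the checkerboard case a single minor will be supported on several levels, so I will form linear combinations of gradients of a short, explicitly indexed family of minors — and, if needed, run an induction on $j$ like the one isolating $e_{1,2k}$ there — to cancel all non-cycle levels except $j$, landing a vector of the required form $e_{i_0,i_0+j}-e_{i_0+1,i_0+1+j}$ in $\TT(A)\cap U_j$.

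The main obstacle is precisely this gradient bookkeeping at the symbolic point $A=(M_n^\pm(z))^{-1}$: unlike the concrete $\Id$ and $\mathcal C_n$, the relevant cofactors are rational functions of $z$, so the minors must be chosen so that the $z$-dependent factors are common and cancel in the differences, leaving a manifestly nonzero, $z$-independent combination. For the second inclusion one repeats everything with $N_n^-$ in place of $N_n^+$: now $A$ is circulant only up to the corner sign, and $\tau^-$ sends $e_{p,q}\mapsto\pm e_{p+1,q+1}$ with a sign flip whenever an index wraps past the corner. Hence on $U_j$ the map $\tau^-$ is a signed cyclic shift whose fixed line is spanned by the vector with entries $+1$ for $i\le n-j$ and $-1$ for $i>n-j$; being a real signed permutation, its invariant complement is the orthogonal complement of that vector, which is exactly the signed-balance space $\{\sum_i\alpha_i e_{i,i+j}:\sum_{i=1}^{n-j}\alpha_i-\sum_{i=n-j+1}^{n}\alpha_i=0\}$. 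The same orbit-spanning argument then reduces the $-$ case to producing one non-fixed small-support vector per level, with the sign twist carried through the minor computation.
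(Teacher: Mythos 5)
Your symmetry reduction is sound: since $A=(M_n^\pm(z))^{-1}$ is fixed by conjugation by $N_n^\pm$, and this conjugation permutes the defining equations of $L_{C_n}^{-1}\cap(\Id+L_{C_n}^\perp)$ among themselves, $\TT(A)$ is indeed invariant under the induced shift $\tau^\pm$, and the $\tau^\pm$-orbit of a single consecutive difference $e_{i_0,i_0+j}-e_{i_0+1,i_0+1+j}$ spans the (signed-)balanced hyperplane at level $j$ (your identification of the fixed vector of $\tau^-$ and of its invariant complement is also correct). But this is exactly the skeleton of the paper's own argument, which produces one consecutive difference per level and then applies cyclic shifts; the eigenspace language repackages the elementary fact that consecutive differences span the hyperplane $\sum_i\alpha_i=0$, and buys nothing new.

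The genuine gap is that the entire substance of the lemma --- actually exhibiting a vector of $\TT(A)\cap U_j$ --- is never carried out. You describe a plan (choose minors, arrange the $z$-dependent cofactors to cancel, run an induction on $j$) and you yourself call the gradient bookkeeping at the symbolic point ``the main obstacle,'' but you name no minor, compute no gradient, and verify no cancellation; nothing in the proposal guarantees that minors with the required cancellation properties exist, and that existence is precisely what has to be proved. The paper's proof consists of this work: for $j=2$, the minor $\delta(1,2,3)(1,3,4)$ together with the normalization $x_{i,i+2}=-1$ from \cite[Lemma 2.3]{MLbound} yields the \emph{full} level-$2$ space $\langle e_{i,i+2}\rangle$, not merely its balanced part --- a stronger fact that is needed later, because subsequent gradients have components at level $2$ which must be discarded individually, so your plan of dropping only components at levels $0,1,n-1$ is insufficient; for $j=3$, one pairs $\delta(1,2,4)(1,4,n)$ with its cyclic shift $\delta(2,3,n)(3,4,n)$ and uses the circulant identity $a_{1,4}=a_{3,n}\neq 0$ so that the unknown $z$-dependent entries cancel in the difference; and for $j\ge 4$, one pairs $\delta(1,2,3)(1,4,1+j)$ with $\delta(2,3,4)(2,5,2+j)$, the induction hypothesis at levels $3$ and $j-1$ cancelling all unwanted components and leaving $e_{1,j+1}-e_{2,j+2}$. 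Until you supply explicit minors and verify these cancellations (including the sign variants for $N_n^-$), what you have is a correct reduction plus a research plan, not a proof.
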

\begin{proof}
We start with the case of $(M^+_{n}(z))^{-1}$.
Let us consider the $3\times 3$ minor $$\delta(1,2,3)(1,3,4):=\det\begin{pmatrix}
x_{1,1}&x_{1,3}&x_{1,4}\\
x_{1,2}&x_{2,3}&x_{2,4}\\
x_{1,3}&x_{3,3}&x_{3,4}\\
\end{pmatrix}$$

The proof of \cite[Lemma 2.3]{MLbound} implies that we can assume $x_{i,i+2}=-1$ for all $1\le i\le n$.
The $3\times 3$ matrix evaluated at $(M^+_n(z))^{-1}$ is $\begin{pmatrix}
1&-1&a_{1,4}\\
0&0&-1\\
-1&1&0\\
\end{pmatrix}$,
where we do not know the value of $a_{1,4}$. However, we know that $a_{1,4}\neq 0$, from the proof of \cite[Lemma 2.3]{MLbound} and we also know that the matrix $(M^+_{n}(z))^{-1}$ is $N_n^+$ invariant. This information about the entries of the matrix $(M^+_{n}(z))^{-1}$ will be sufficient for our proof
.
The only partial derivations of this $3\times 3$-minor that are not 0 is $$\frac{\partial(\delta(1,2,3)(1,3,4))}{\delta x_{1,3}}\bigg\rvert_{\mathcal (M^+_n(z))^{-1}}=2.$$

Again, we disregarded the partial derivation by $x_{i,i}$ and $x_{i,i+1}$.
Therefore, we have $e_{1,3}\in \TT( (M^+_n(z))^{-1})$. By cyclic shift, we obtain that $e_{i,i+2}\in\TT( (M^+_n(z))^{-1})$, which implies the statement of the lemma for $j=2$.

Now, we consider the minor $$\delta(1,2,4)(1,4,n)=\det\begin{pmatrix}
x_{1,1}&x_{1,4}&x_{1,n}\\
x_{1,2}&x_{2,4}&x_{2,n}\\
x_{1,4}&x_{4,4}&x_{4,n}\\

\end{pmatrix}.$$

This $3\times 3$-matrix evaluated at $(M^+_n(z))^{-1}$ is $$\begin{pmatrix}
1&a_{1,4}&0\\
0&-1&-1\\
a_{1,4}&1&a_{4,n}\\
\end{pmatrix}.$$ 

Again, we do not know the values $a_{1,4},a_{n,4}$. There are several partial derivations of this minor that are non-zero. However, we may disregard the derivations by $x_{1,1},x_{1,2},x_{2,4},x_{1,n},x_{4,4},x_{2,n}$ since the corresponding vectors $e_{i,j}$ belong to the space $\TT((M^+_n(z))^{-1})$.

Hence, we obtain a vector $-e_{4,n}-2a_{1,4}e_{1,4}\in \TT((M^+_n(z))^{-1})$. Considering an analogous minor, namely $\delta(2,3,n)(3,4,n)$, we obtain a vector $-e_{4,n}-2a_{3,n}x_{3,n}\in \TT((M^+_n(z))^{-1}) $. By subtracting these two vectors and use the fact that $a_{1,4}=a_{3,n}\neq 0$, we obtain that $e_{1,4}-e_{3,n}\in \TT((M^+_n(z))^{-1})$. 

Analogously, by cyclic shift, we obtain that $e_{i,i+3}-e_{i+1,i+4}\in \TT(( M^+_n(z))^{-1})$ which shows the statement of the lemma for $j=3$.
From now on we can proceed by induction:

We consider the minor $\delta(1,2,3)(1,4,1+j)$. By computing the partial derivations, we obtain a vector

$$-a_{2,1+j}e_{1,4}-e_{3,1+j}-a_{1,4}e_{2,j+1}-e_{1,j+1}\in \TT((M^+_n(z))^{-1}).$$

Analogously, by considering the minor $\delta(2,3,4),(2,5,2+j)$
we obtain
$$-a_{2,1+j}e_{2,5}-e_{3,2+j}-a_{1,4}e_{3,j+2}-e_{2,j+2}\in \TT((M^+_n(z))^{-1}).$$

By taking the difference, we get
$$a_{2,1+j}(e_{1,4}-e_{2,5})+(e_{3,i+j}-e_{4,2+j})+a_{1,4}(e_{2,j+1}-e_{3,2+j})+(e_{1,j+1}-e_{2,j+2})\in \TT((M^+_n(z))^{-1}).$$

The differences $e_{1,4}-e_{2,5},e_{3,i+j}-e_{4,2+j},e_{2,j+1}-e_{3,2+j}$ are in $\TT((M^+_n(z))^{-1})$ by the induction hypothesis, thus we get $e_{1,j+1}-e_{2,j+2}\in \TT((M^+_n(z))^{-1})$ as well which concludes the first part of the lemma.

The proof is completely analogous in the case of the matrix $(M^-_n(z))^{-1}$. We consider the same minors; the only difference is that there will be a change of signs when one of the indices is greater than $n$. 
\end{proof}

\begin{Proposition}\label{uglymatr}
The points of the form $\{DM_{2m+1}^+(z)^{-1}D$ for $D\in\mathcal D_{2m+1}^\pm;P_{m-1}(z)+zP_{m-2}(z)=0\}$ are smooth in the intersection $L_{C_{2m+1}}^{-1}\cap(\Id+L_{C_{2m+1}}^\perp)$.

The points of the form $\{DM_{2m}^+(x)^{-1}D$ for $D\in\mathcal D_{2m}^\pm;P_{m-1}(x)-x^2P_{m-3}(x)=0$ and 
$ \{DM_{2m}^-(x)^{-1}D$ for $D\in\mathcal D_{2m}^\pm;P_{m-2}(x)=0\}$ are smooth in the intersection 
$L_{C_{2m}}^{-1}\cap(\Id+L_{C_{2m}}^\perp)$.
\end{Proposition}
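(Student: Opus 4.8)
\emph{The plan is to} prove $\TT(A)=S^2(\CC^n)$ for a single representative $A$ of each of the three families, namely $A=(M_{2m+1}^+(z))^{-1}$, $A=(M_{2m}^+(x))^{-1}$ and $A=(M_{2m}^-(x))^{-1}$, where the parameter satisfies the corresponding equation $P_{m-1}(z)+zP_{m-2}(z)=0$, $P_{m-1}(x)-x^2P_{m-3}(x)=0$, or $P_{m-2}(x)=0$. Exactly as in Proposition~\ref{checkerboardmatr}, conjugation by $D\in\mathcal D_n^{\pm}$ is a regular automorphism preserving $L_{C_n}^{-1}\cap(\Id+L_{C_n}^\perp)$, so smoothness of the representative propagates to its entire orbit $\{DAD\}$; thus it suffices to treat $D=\Id$. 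Throughout I keep the normalization $x_{i,i+2}=-1$ of Lemma~\ref{TT} and exploit that $A$ is $N_n^+$-invariant (resp.\ $N_n^-$-invariant), so the entries of $A$ are constant along each diagonal band up to the sign introduced by the corner $\pm x$.

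First I would invoke Lemma~\ref{TT}. Writing a band-$j$ vector as $\sum_i\alpha_i e_{i,i+j}$, that lemma already places the sum-zero hyperplane $\{\sum_i\alpha_i=0\}$ (for the $M^+$ families) or the signed-sum hyperplane $\{\sum_{i\le n-j}\alpha_i-\sum_{i>n-j}\alpha_i=0\}$ (for $M^-$) inside $\TT(A)$ for every $j\ge 2$, and for $j=2$ its proof gives the complete band $\langle e_{i,i+2}\rangle$. Since $e_{i,i},e_{i,i+1}\in\TT(A)$ are the gradients of the equations $x_{i,i}=1$ and $x_{i,i+1}=0$, the only thing missing is, for each band $j$ with $3\le j\le\lfloor n/2\rfloor$, a single tangent vector whose band-$j$ coefficients have nonzero sum (resp.\ nonzero signed sum): any such vector, added to the hyperplane already available, fills the whole band, and the union of all bands is $S^2(\CC^n)$.

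These extra vectors I would obtain by an induction on $j$ that climbs the bands. For the inductive step one reuses the minors $\delta(1,2,3)(1,4,1+j)$ already computed in Lemma~\ref{TT}: their gradient at $A$ is $-a_{2,1+j}e_{1,4}-e_{3,1+j}-a_{1,4}e_{2,j+1}-e_{1,j+1}$, whose only band-$j$ term is $-e_{1,j+1}$, with coefficient $-1$. Assuming inductively that all bands $2,\dots,j-1$ are already complete, the three remaining summands lie in those bands and can be subtracted off outright, leaving the single vector $-e_{1,j+1}$; its nonzero sum, together with the hyperplane from Lemma~\ref{TT}, completes band $j$ after a cyclic shift. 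Note that here no nonvanishing hypothesis is needed, since the leading coefficient is the constant $-1$ rather than one of the entries $a_{i,j}$; the step is valid for $j\ge 4$ (for $j=3$ the columns $\{1,4,1+j\}$ collide).

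The main obstacle is therefore the base of the induction, band $3$, where the difficulty observed in Lemma~\ref{TT} resurfaces: the natural minor $\delta(1,2,4)(1,4,n)$ couples band $3$ to band $4$ (through $e_{1,4}$ and $e_{4,n}$), so extracting a band-$3$ vector of nonzero sum requires either disentangling the two bands simultaneously or selecting a minor whose band-$4$ cofactors vanish at $A$ while a band-$3$ cofactor does not. Whether such a cofactor is nonzero depends on the precise entries of $(M_n^{\pm}(x))^{-1}$, and this is exactly where the defining polynomial equations $P_{m-1}(z)+zP_{m-2}(z)=0$, $P_{m-1}(x)-x^2P_{m-3}(x)=0$ and $P_{m-2}(x)=0$ enter, via the explicit entry computation in the proof of \cite[Lemma 2.3]{MLbound}; verifying this nonvanishing for each of the three families is the computational heart of the argument. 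A final point of care is the even case $n=2m$: the top band $j=m$ is antipodal, has only $m$ entries, and its cyclic shift has period $m$, so---just as in the closing telescoping step of Proposition~\ref{checkerboardmatr}---it must be completed by a separate ad hoc argument rather than the generic step above. Once every band is filled, $\TT(A)=S^2(\CC^n)$, the Jacobian has full rank $\binom{n+1}{2}$, and the point is smooth.
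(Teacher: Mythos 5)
Your reduction to a single representative via the $\mathcal D_n^\pm$-action, your reading of Lemma~\ref{TT}, and your inductive step for bands $j\ge 4$ are all sound: the gradient of $\delta(1,2,3)(1,4,1+j)$ does have its unique band-$j$ term equal to $-e_{1,j+1}$ with constant coefficient, so once bands $2,\dots,j-1$ are full, band $j$ becomes full. But the argument has a genuine gap exactly at the place you label ``the computational heart'': you never produce a vector of $\TT(A)$ whose band-$3$ coefficients have nonzero sum, and without that base case the induction never starts, since the completion of every band $j\ge 4$ is conditional on band $3$. This is not a routine verification that can be deferred. After Lemma~\ref{TT}, any common tangent vector has the form $\sum_{j\ge 3}c_j B_j$ with $B_j=\sum_i e_{i,i+j}$, and the vectors extractable from minors (e.g. $-e_{4,n}-2a_{1,4}e_{1,4}$ from $\delta(1,2,4)(1,4,n)$, or the gradient above) only impose linear relations on $c_3,\dots,c_{\lfloor n/2\rfloor}$ whose coefficient matrix involves the unknown entries $a_{i,j}$ of $(M_n^\pm(z))^{-1}$. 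Proving that this system is nonsingular for every root $z$ of the relevant polynomial is exactly equivalent to the smoothness being claimed; the entry information from \cite[Lemma 2.3]{MLbound} (essentially $a_{1,4}\neq 0$, already spent in Lemma~\ref{TT}) does not give it. The even-$n$ top band $j=m$, which you also defer to ``a separate ad hoc argument,'' is a second unproven case, with the extra complication that for $M^-_{2m}$ the wrap-around signs change the shape of the signed-sum hyperplane there.

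It is worth seeing how the paper closes precisely this gap, because it explains why your missing nonvanishing is hard to obtain by cofactor computations. The paper never completes the bands at all: Lemma~\ref{TT} is used only to conclude that a putative common tangent vector of $L_{C_n}^{-1}$ and $\Id+L_{C_n}^\perp$ at $(M_n^+(z))^{-1}$ is $N_n^+$-invariant, hence yields a line in $\PP(W_n)$ through the point which is tangent to $\PP(L_{C_n}^{-1})$. Applying the inversion $\phi(A)=A^{-1}$, which is an isomorphism near regular matrices, preserves $\PP(W_n)$, and carries $\PP(L_{C_n}^{-1})$ to $\PP(L_{C_n})$, transports this tangency into the statement that the line $\{M_n^+(x)\}=\PP(L_{C_n}\cap W_n)$ is tangent to $\overline{\phi(\Id+L_{C_n}^\perp)}$. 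That one-dimensional intersection is cut out by the univariate polynomial $x^{n-2}+(-1)^nP_{n-2}(x)$ (proof of \cite[Lemma 2.13]{MLbound}), whose factorizations $(P_m(x)-xP_{m-1}(x))(P_{m-1}(x)+xP_{m-2}(x))$ for $n=2m+1$ and $P_{m-1}(x)\left(P_{m-1}(x)-x^2P_{m-3}(x)\right)$ for $n=2m$, together with the simplicity of the roots of $P_{n-3}$ \cite[Corollary 3.14]{MLbound}, show that $z$ is a simple root---contradiction. In other words, the nonvanishing your plan requires is exactly this simple-root property, and the inversion trick is what makes it accessible; to salvage your direct band-filling approach you would have to re-derive that property as an explicit determinant identity in the entries of $(M_n^\pm(z))^{-1}$, which is the step left open.
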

\begin{proof}
Similarly as in the previous section, it is sufficient to prove the smoothness of the points $ (M^+_n(z))^{-1}$ and $ (M^-_n(z))^{-1}$ for the appropriate values of $z$ (by \cite[Lemma 2.17]{MLbound}). We will start with the point $ (M^+_n(z))^{-1}$, the other case will be analogous.

Suppose that the intersection $L_{C_n}^{-1}\cap(\Id+L_{C_n}^\perp)$ is not smooth at the point $(M^+_n(z))^{-1}$. Both $L_{C_n}^{-1}$ and $(\Id+L_{C_n}^\perp)$ are smooth at this point since it is a regular matrix. This means these two varieties have a common tangent vector at the point $(M^+_n(z))^{-1}$. By Lemma~\ref{TT}, we can conclude that this common tangent vector lies in the space of $N^+_n$-invariant matrices.

Thus, if we denote by $$W_n=\{A\in S^2(\CC^n)\:\ N^+_nA(N^+_n)^{-1}=A\},$$ there exists a line $\ell'\subset W_n$ which contains a point $M^+_n(z)$ and is tangent to $L_{C_n}^{-1}$.

Note that the same thing is true if we pass to the projective space, there is a projective line $\ell\subset\PP(W_n)$ which contains the (class of the) point $M^+_n(z)$ and is tangent to the projectivization of $L_{C_n}^{-1}$.

Let us consider the birational map $$\phi:\PP(S^2(\CC^n))\dashrightarrow \PP(S^2(\CC^n))$$ defined by $\phi(A)=A^{-1}$. This map is an isomorphism on the set of regular matrices. Since the matrix $M^+_n(z)$ is regular, we know that the variety $\phi(L_{C_n}^{-1})$ is tangent to the line $\phi(\ell)$ at the point $M^+_n(z)$. However, at least locally at the set of all regular matrices, we have that $\phi(\PP(L_{C_n}^{-1}))=\PP(L_{C_n})$. Also, note that $\phi(\PP(W_n))=\PP(W_n)$. Thus, after applying the inverse map $\phi$, we obtain two vector spaces with a common tangent vector, i.e. $\phi(\ell)\in \PP(L_{C_n}\cap W_n)$. However, the last intersection is just the line given by the (closure of the) set of matrices $M^+_n(x)$. 

As a consequence, we obtain that the line $\{M^+_n(x)\:\ x\in\CC\}$ is tangent to the variety $\overline{\phi(\Id+L_{C_n}^\perp)}$, i.e their intersection is not smooth.

However, the intersection $\{M^+_n(x)\:\ x\in\CC\}\cap \overline{\phi(\Id+L_{C_n}^\perp)}$ is determined by the equation $x^{n-2}+(-1)^nP_{n-2}(x)$ (this follows from the proof of \cite[Lemma 2.13]{MLbound}).

For odd $n=2m+1$, we have $x^{n-2}+(-1)^nP_{n-2}(x)=(P_m(x)-xP_{m-1}(x))(P_{m-1}(x)+xP_{m-2}(x)).$

Since $z$ is not a root of the first factor, and the second factor, $P_{m-1}(x)+xP_{m-2}(x)$, is a divisor of the polynomial $P_{n-3}(x)$ which has only simple roots by \cite[Corollary 3.14]{MLbound}, it follows that the intersection  $\{M^+_n(x)\:\ x\in\CC\}\cap \overline{\phi(\Id+L_{C_n}^\perp)}$
is smooth at the point $M^+_n(z)$ which is a contradiction.

In the even case $n=2m$, we get to the analogous contradiction with the different factorization
$x^{n-2}+(-1)^nP_{n-2}(x)=P_{m-1}(x)((P_{m-1}(x)-x^2P_{m-3}(x)).$
This shows that the point $(M^+_n(z))^{-1}$ is smooth at the intersection $L_{C_n}^{-1}\cap(\Id+L_{C_n}^\perp)$.

Analogously, we can show that the point $(M^-_n(z))^{-1}$ is smooth, the difference will be that instead of $N_n^+$-invariant matrices, we must use the $N_n^-$-invariant matrices.

\end{proof}

\begin{Theorem}\label{smooth-intersection}
All points in the intersection $L_{C_{n}}^{-1}\cap(\Id+L_{C_{n}}^\perp)$ for even $n$ are smooth.
\end{Theorem}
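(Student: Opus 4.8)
The plan is to assemble the theorem directly from the classification of Lemma~\ref{odd-intersection} together with the smoothness statements proved for each individual type of point in the preceding propositions. First I would invoke the even part of Lemma~\ref{odd-intersection}, which asserts that for even $n$ every point of $L_{C_n}^{-1}\cap(\Id+L_{C_n}^\perp)$ belongs to one of four families: the identity $\Id$; the matrices $DM_n^+(x)^{-1}D$ at the admissible values of $x$; the matrices $DM_n^-(x)^{-1}D$ at the admissible values of $x$; and the checkerboard matrices $D\mathcal C_n D$, where in each case $D$ ranges over $\mathcal D_n^\pm$. Since this list is exhaustive, it suffices to check smoothness on each family separately.

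Next I would match each family to the proposition that handles it. The identity is smooth by Proposition~\ref{id}. The checkerboard points $D\mathcal C_n D$ are smooth, for even $n$, by Proposition~\ref{checkerboardmatr}. The two remaining families $DM_n^+(x)^{-1}D$ and $DM_n^-(x)^{-1}D$ are smooth by Proposition~\ref{uglymatr}, which treats both the $M^+$ and the $M^-$ cases and already incorporates the conjugation by $D\in\mathcal D_n^\pm$ through \cite[Lemma 2.17]{MLbound}. Because the maps $A\mapsto DAD$ are regular automorphisms preserving the intersection, smoothness at a representative $\Id$, $\mathcal C_n$, $(M_n^+(x))^{-1}$, or $(M_n^-(x))^{-1}$ already forces smoothness along its entire $\mathcal D_n^\pm$-orbit, which is precisely how the three propositions are organized.

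I do not expect a genuine obstacle at this stage: essentially all of the difficulty has been absorbed into the earlier propositions, in particular the delicate inductive tangent-space computation for the checkerboard matrices in Proposition~\ref{checkerboardmatr} and the birational-duality argument reducing smoothness of $(M_n^{\pm}(x))^{-1}$ to a transversality statement for the pencil $\{M_n^{\pm}(x)\}$ in Proposition~\ref{uglymatr}. The only point needing care is bookkeeping: confirming that the four families of Lemma~\ref{odd-intersection} genuinely exhaust the intersection, and that the hypotheses of each proposition, especially the restriction to even $n$ in the checkerboard case and the defining equations on $x$, match exactly the cases occurring in the classification. Once that correspondence is verified, the theorem follows by simply combining the three propositions.
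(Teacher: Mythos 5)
Your proposal is correct and follows essentially the same route as the paper: the paper's proof likewise combines the classification of intersection points in Lemma~\ref{odd-intersection} with Propositions~\ref{id}, \ref{checkerboardmatr}, and \ref{uglymatr}, one for each family of points. The bookkeeping you flag (exhaustiveness of the list and the $\mathcal D_n^\pm$-orbit reduction) is exactly what those propositions already handle, so nothing further is needed.
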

\begin{proof}
In Lemma~\ref{odd-intersection} it is provided the complete list of the intersection points. By using Propositions \ref{id}, \ref{checkerboardmatr}, \ref{uglymatr}, we deduce that all the intersection points are smooth.
\end{proof}

Now we provide the proof for our main theorem:

\begin{proof}[Proof of Theorem~\ref{ML}]
The ML-degree of the space $L_{C_n}$ is equal to the degree of the projection which is the rational map $\Tilde{\pi}: \PP(L_{C_n}^{-1})\dashrightarrow \PP(S^2(\CC^n)/L_{C_n}^\perp)$.

This is equal to the number of intersection points of $L_{C_n}^{-1}\cap (S+L_{C_n}^\perp)$ for a generic matrix $S$. More generally, it is equal to the number of the intersection points for any matrix $S$ providing that the intersection is transverse and the fiber over matrix $S$ in the graph of $\Tilde{\pi}$ does not contain any point from the base locus $L_{C_n}^\perp$. By Theorems \ref{smooth-intersection} and \ref{noidentity}, this is true for $S=\Id$. 

Thus, the ML-degree of $L_{C_n}$ is equal to the number of intersection points of $L_{C_{n}}^{-1}\cap(\Id+L_{C_{n}}^\perp)$. By Lemma~\ref{odd-intersection}, this number is precisely $(n-3)\cdot 2^{n-2}+1$, which proves the theorem.
\end{proof}

\vspace{.2in}
\section*{Acknowledgement}
This paper was written while the first author visited the Institute for Advanced Study, Princeton, in October 2024. She expresses her gratitude to June Huh and Mateusz Micha\l{}ek for the invitation and interesting discussions.
RD was supported by the Alexander von Humboldt Foundation and DFG grant nr 467575307. MV was supported by Slovak VEGA grant 1/0152/22.

\vspace{.2in}

\end{document}